\DeclareMathAlphabet{\pazocal}{OMS}{zplm}{m}{n}
\setlist[enumerate]{leftmargin=.5in}
\setlist[itemize]{leftmargin=.5in}
\newcommand{\mG}{\mathcal{G}}
\newcommand{\rank}{\mbox{rank}}
\newcommand{\R}{\mathbb{R}}
\newcommand{\N}{\mathbb{N}}
\newcommand{\mb}{\begin{bmatrix}}
\newcommand{\me}{\end{bmatrix}}
\newtheorem{thm}{Theorem}[section]
\newtheorem*{re}{Remark}
\DeclareMathOperator{\E}{\mathbb{E}}
\DeclareSymbolFont{largesymbol}{OMX}{yhex}{m}{n}
\DeclareMathAccent{\Widehat}{\mathord}{largesymbol}{"62}
\numberwithin{equation}{section}
\title{Estimation of Cointegration Vectors in Time Series via Global Optimisation}
\author{Alvey Qianli Lin}
\address{Department of Mathematics, University of Hong Kong, Pokfulam, Hong Kong}
\email{u3594734@connect.hku.hk}
\author{Zhiwen Zhang}
\address{Department of Mathematics, University of Hong Kong, Pokfulam, Hong Kong}
\email{zhangzw@hku.hk}
\date{\today}
\DeclareMathOperator*{\argmin}{arg\,min}
\begin{document}

\maketitle

\begin{abstract}
	Time Series Analysis has been given a great amount of study in which many useful tests were developed. The phenomenal work of Engle and Granger \cite{engle1987co} in 1987 and Johansen \cite{johansen1988statistical} in 1988 has paved the way for the most commonly used cointegration tests so far. Even though cointegrating relationships focus on long-term behaviour and correlation of multiple nonstationary time series, oftentimes we encounter statistical data with limited sample sizes and other information. Thus other tests with empirical advantages may also be of considerable importance. In this paper, we provide an optimisation approach motivated by the Blind Source Separation, or also known as Independent Component Analysis, for cointegration between financial time series. Two methods for cointegration tests are introduced, namely decorrelation for the bivariate case and maximisation of nongaussianity for higher-dimensions. We highlight the empirical preponderances of independent components and also the computational simplicity, compared to common practices of cointegration such as the Johansen's Cointegration Test \cite{johansen1991estimation}. The advantages of our methods, especially the better performances in limited sample size, enable a wider range of application and accessibility for researchers and practitioners to identify cointegrating relationships.
	
\end{abstract}
\maketitle
\section{Introduction}

In the realm of stochastic processes, time series analysis has long been a subject of study due to its wide application in forecasting across various fields. The challenges posed by time series analysis are often attributed to the notion of nonstationarity, characterized by unpredictable fluctuations in data. Engle and Granger in 1987 \cite{engle1987co} \cite{engle1987forecasting} proposed a cointegration process to first test for the presence of a unit root, and then estimate the cointegrating relationship which involves regressing one non-stationary process on another, and the residuals indicate the presence of cointegration. Johansen's work in 1988 \cite{johansen1988statistical} \cite{johansen1991estimation} was built upon Engle and Granger's approach, while introducing a more general framework which accommodates high-dimensional cointegration and an estimation of the cointegration rank and the corresponding vectors. Both of the former works have been cited extensively in the literature, such as in the use of cointegration in exchange rate or interest rate markets \cite{macdonald1992stable} or macroeconomic elements \cite{cheung1997further}. Later work also includes the use of panel method \cite{pedroni1999critical} and wavelet analysis \cite{nason1999wavelets}. Some of the modern researches have addressed the potential drawback for the implementation of Johansen's technique, which is the lack of precision with the presence of small sample size data. This motivates improvements and developments on the original methodology. One aspect which has been constantly avoided while not exploited sufficiently is the nongaussian nature of the stochastic processes, which is closely related to the method of this paper. The limited number of observations typically available in macroeconomic time series data means that the nongaussian distribution of the underlying data can be informative when testing for cointegrating relationships.

In this paper, we introduce two novel methods based on global optimisation inspired by Blind Source Separation (\cite{belouchrani1997blind}, \cite{xin2014mathematical}) in view of lower and higher dimensions for practical analysis. Blind source separation is widely used in signal processing and speech modeling by its own nature to separate out the desired signals through a noised mixture of signals, akin to our brain's ability to process voices and sounds in a clamorous environment. The idea here is that we may view several time series as mixtures, then we devise a system such that they are mixtures of some other processes that we are familiar with and are easy to process. A blind source separation is done without any primary knowledge of the mixing information. Hence, by such a method the separation from the original series becomes promising. Our global optimisation also gives an estimate for the coefficients used in cointegration, which plays a significant role in convergence trading. The empirical advantage of the implementation of Independent Component Analysis is that when evaluating the cointegration vector, the orders of cointegration of the time series are not required as a prerequisite of running the algorithm. Also notice that when dealing with new datasets or specific time periods or events, sample sizes of statistical data could be small and highly non-normal, while this paper would show how the investigation of independent components provides a better performance in simulating the long-term trends of time series.

The structure of this paper is as follows, section 2 introduces preliminary knowledge about multivariate time series and the basic algorithms behind blind source separation, specifically the methodology of decorrelation and maximization of nongaussianity. \cite{hyvarinen2000independent}. Furthermore, the Johansen's cointegration test would also be introduced for further reference and comparison with our own algorithm (see Appendix). In section 3, we discuss the theoretical background and convergence properties of the methods. Moving on to section 4, the paper provides practical applications of the methods using both simulated series and oil prices as a real-life example. Section 4 also brings out the empirical advantages and drawbacks with comparisons within our methods and Johansen's method. 

\section{Preliminaries}\label{sec:prelim}

\subsection{Vector Autoregressive Models}

\hfill

Let $\mathbf{s}_t$ be a $k$-dimensional vector, a Vector Auto-Regressive Model (VAR(p)) is defined as:
$$\mathbf{s}_t = \boldsymbol{\phi}_0 +\sum_{i=1}^{p}{\Phi_i\mathbf{s}_{t-i}} + \boldsymbol{\varepsilon}_t$$
where $\mathbf{\phi_0}$ is a $k$-dimensional vector and $\Phi_i$ is a $k \times k$ coefficient matrix for $i = 1, 2, \ldots, p$.

By introducing the Back-Shift Operator, we see that a VAR(p) process can be transformed into
$$\Phi(B)\mathbf{s}_t=\boldsymbol{\phi}_0+\boldsymbol{\varepsilon}_t$$
where $\Phi(B) = I - \Phi_1B -\Phi_2B^2 - \ldots - \Phi_pB^p$.

Now consider $\mathbf{x}_t := (\mathbf{s}_{t-p+1} , \mathbf{s}_{t-p+2}, \ldots, \mathbf{s}_t) \in \R^k \times \R^p$ and $\boldsymbol{\delta}_t := (\mathbf{0}, \mathbf{0}, \ldots, \boldsymbol{\varepsilon}_t) \in \R^k \times \R^p$. A VAR(p) model is equivalent to 
$$\mathbf{x}_t = \Phi_* \mathbf{x}_{t-1} + \boldsymbol{\delta}_t$$
where $\Phi_*$ is a $kp \times kp$ matrix such that 
$$\Phi_* =  \mb \mathbf{0} & I & \mathbf{0} & \mathbf{0} & \ldots & \mathbf{0} \\ \mathbf{0} & \mathbf{0} & I & \mathbf{0} & \ldots & \mathbf{0} \\ \vdots & \vdots & \vdots & \vdots & \ddots & \vdots \\ \mathbf{0} & \mathbf{0} & \mathbf{0} & \mathbf{0} & \ldots & I \\ \Phi_p & \Phi_{p-1} & \Phi_{p-2} & \Phi_{p-3} & \ldots & \Phi_1 \me $$
This matrix is called the companion matrix of the matrix polynomial $\Phi(B)$ \cite{tsay2005analysis}. In this case, the system is transformed into a model of a compact form of a 1-dimensional VAR(p) which can be analysed componentwisely. From this, the process $\mathbf{x_t}$ can be written in a linear relationship with the series $\boldsymbol{\delta}_t$:
$$\mathbf{x}_t - \boldsymbol{\mu} = \boldsymbol{\delta}_t + \sum_{i=1}^{p}{\Phi_{*}^i}\boldsymbol{\delta}_{t-i}$$
Since $\boldsymbol{\delta}_t$ is a white noise series and $\mathbf{x}_t$ has a linear relationship with the past values of the white noise series, the necessary and sufficient condition for a VAR(p) model to be weakly stationary is that the eignvalues of the companion matrix should be less than 1 in modulus.

Let $\mathbf{s}_t$ be a $k$-dimensional vector, a Vector Moving-Average Model (VMA(q)) is defined as:
$$\mathbf{s}_t = \mathbf{c}_0 + \boldsymbol{\varepsilon}_t - \sum_{j=1}^{q}{\Theta_j\boldsymbol{\varepsilon_{t-j}}}$$
where $\mathbf{c}_0$ and $\boldsymbol{\varepsilon}_{t-i}$ are $k$-dimensional vectors for $i = 0, 1, \ldots, q$ and each $\boldsymbol{\varepsilon}_{t-i}$ is a white noise series. Therefore the most genarl case, a Vector Auto-Regressive Moving-Average (ARMA(p,q)) Model can be written as
$$\mathbf{s}_t - \sum_{i=1}^{p}{\Phi_i\mathbf{s}_{t-i}} = \boldsymbol{\phi}_0 + \boldsymbol{\varepsilon}_t - \sum_{j=1}^{q}{\Theta_j\boldsymbol{\varepsilon_{t-j}}}$$
where $\mathbf{s}_t$ is a vector time series, $\boldsymbol{\varepsilon}_t$ is a white noise series, $\Phi_i$ and $\Theta_j$ are coefficient matrices for $i = 1, 2, \ldots, p$ and $j = 1, 2, \ldots, q$ respectively.

\subsection{The blind source separation} \label{sec:bss}

\hfill

The blind source separation, also known as independent component analysis, is a process of separating out independent signals from a mixture of multiple signals. A straightforward system is 
$$\mathbf{x} = A\mathbf{s}$$
where we assume that $\mathbf{x}, \mathbf{s} \in \R^n$ and $A \in \R^{n \times n}$. In a practical problem setting, $\mathbf{x}$ is known in terms of observed data, we then separate the vector $\mathbf{s}$, which is assumed to be consisting of independent components, from the mixing matrix $A$, which is assumed to be nonsingular.

\subsubsection{The decorrelation method} \label{subsec:dec}

\hfill

When $n=2$, assume that the system becomes $\mathbf{x}_t=A\mathbf{s}_t$, i.e.
\begin{equation}\label{eqn:2*2bss}
	\begin{cases}
		x_{1t} = a_{11}s_{1t} + a_{12}s_{2t}, \\
		x_{2t} = a_{21}s_{1t} + a_{22}s_{2t}.
	\end{cases}\,
\end{equation}

We wish to diagonalise \eqref{eqn:2*2bss}, so we set $v_1(t) = a_{22}x_{1t} - a_{12}x_{2t}$ and $v_{2t} = -a_{21}x_{2t} + a_{11}x_{2t}$. Hence, we have
\begin{equation}\label{eqn:2*2diagonalisedsystem}
	\begin{cases}
		v_{1t} = (a_{11}a_{22} - a_{12}a_{21})s_{1t} = det(A)s_{1t}, \\
		v_{2t} = \det(A)s_{2t}.
	\end{cases}\
\end{equation}
If the mixing matrix $A$ is nonsingular, then $v_{1t}$ and $v_{2t}$ are independent as $s_{1t}$ and $s_{2t}$ are independent.

If the transformed signals are stationary for a period of time interval $N \in \R$, which means $\E[v_{1t}v_{2,t-n}] = 0$ for all $n \in [-N,N]$, letting $C_n^{ij} = \E[x_{it}x_{j,t-n}]$, we then have 
\begin{align*}\label{eqn:bss_moment_condition}
	0 & = \E[v_{1t}v_{2,t-n}] \\
	& = \E[(a_{22}x_{1t}-a_{12}x_{2t})(-a_{21}x_{1,t-n}+a_{11}x_{2,t-n})] \\
	& = 	-a_{22}a_{21}C_n^{11}+a_{12}a_{21}C_n^{21}+a_{22}a_{11}C_n^{12}-a_{12}a_{11}C_n^{22}.
\end{align*}
If we further parametrise by letting
\begin{align*}
	& a_{22} = \cos{\theta}, a_{12} = \sin{\theta}, \\
	& a_{21} = \cos{\phi}, a_{11} = \sin{\phi}. 
\end{align*} 
for some angles $\theta$ and $\phi$, we have
$$C_{n}^{11} -  \tan{(\theta)}C_{n}^{21} - \tan{(\phi)}C_{n}^{12} + \tan{(\theta)}\tan{(\phi)}C_{n}^{22} = 0$$
where $C_{n}^{ij} = \E[x_{it}x_{j,t-n}]$.
For $n = 1,2$, rearranging and dividing we get
$$a\tan^2{(\theta)} + b\tan{(\theta)} + c = 0$$
where
\begin{align*}
	& a = C_1^{21}C_2^{22} - C_1^{22}C_2^{21},\\
	& b = C_1^{22}C_2^{11} + C_1^{12}C_2^{21} - C_1^{21}C_2^{12} - C_1^{11}C_2^{22},\\
	& c = C_1^{11}C_2^{12} - C_1^{12}C_2^{11}.
\end{align*} 
Notice that after solving the above quadratic equation, we may get the value for the angle parameters and hence trace back the mixing matrix and the clean signal. The above algorithm is called decorrelation \cite{xin2014mathematical}. Notice that decorrelation essentially transforms the problem into solving a polynomial. 


\subsubsection{Maximisation of nongaussianity} \label{subsec:maxnong}

\hfill

Decorrelation seems straightforward for its primary goal to reduce the problem into solutions of polynomials. However, it is not practical to utilise it when the order gets above three. Sacrificing the accuracy of prediction and recovery is indeed not optimal. However, we have other means to deal with higher order cases.

Consider the following optimisation problem

\begin{equation}\label{nongaussianopt}
	\begin{aligned}
		\hat{\mathbf{w}}=\max_{\mathbf{w}} \quad & \E[\mG (\mathbf{w}^T\mathbf{x})]\\
		\textrm{s.t.} \quad & \mathbf{w}^T\mathbf{w}=1\\
	\end{aligned}
\end{equation}
where the objective value is a measure of nongaussianity \cite{hyvarinen2000independent}, $\mathbf{w}$, $\mathbf{x} \in \R^n$. 

\begin{re}
	Common practices of measuring nongaussianity include the use of functions such as kurtosis, while potential drawbacks of kurtosis is usually related to the sensitivity to the outliers of data. One may visit Hyvärinen et al. (2001) for detailed analysis. A way to make amends, however, is to utilise negentropy in information theory. Detailed explanations are introduced shortly.
\end{re}

The motivation comes from the Central Limit Theorem, since a linear combination of independent random variables produces a more normal distribution. Roughly speaking, one independent-component signal tends to be less stable than any other weighted sum consisting of more independent components of the source signal. Given the original problem setting, $\mathbf{x} = A\mathbf{s}$, we may consider a linear combination of $\mathbf{x}$ given by $y=\mathbf{b}^T\mathbf{x}=\mathbf{b}^TA\mathbf{s}$. Letting $\mathbf{q}=(\mathbf{b}^TA)^T$, if we wish to recover the original signal $\mathbf{s}=A^{-1}\mathbf{x}$ by finding the vector $\mathbf{b}$ so that it is a row of $A^{-1}$, the linear combination $\mathbf{b}^T\mathbf{x}$ would recover one of the independent components \cite{hyvarinen2000independent} so that $\mathbf{q}$ must have exactly one nonzero entry i.e. a unit vector $\mathbf{e_i}$, $i \in \{1,...,n\}$, in which case the linear combination $\mathbf{b}^T\mathbf{x}$ is the least normal, and this explains why we aim to maximise nongaussianity. The remaining components can be recovered by using the condition that $\mathbf{s}$ is assumed to have independent components.

While it remains to construct an appropriate algorithm to arrive at a solution of the optimisation problem. Let us define some key terms for reference. Let $y$ be a random variable, the kurtosis of $y$ is given by
$$kurt(y)=\E[y^4]-3(\E[y^2])^2$$

Essentially kurtosis measures the fourth moments of random variables, and it will be useful in the construction of an approximation of the following quantities as a measure of nongaussianity. However, a general critique of kurtosis is its sensitivity to outliers, meaning that in practical analysis such evaluation may not be stable or robust enough, whereas we do have an amended measure with generally better performance.

Let $\mathbf{y} \in \R^n$ be a random variable with probability density function $p_{\mathbf{y}}$. The (differential) entropy is defined as
$$H(\mathbf{y})=-\int p_{\mathbf{y}}(\boldsymbol{\eta})\log\left(p_{\mathbf{y}}(\boldsymbol{\eta})\right) \  d\boldsymbol{\eta}$$

In statistics, roughly speaking, entropy measures the randomness of a random variable, and here we utilise the fact that a normal random variable produces the largest entropy. With the help of this, we are ready to define the objective function that we aim to use. Notice that we wish to develop a nonnegative measure of nonguassianity which is zero with a gaussian random variable input.

Let $\mathbf{y} \in \R^n$ be a random variable. The negentropy of $\mathbf{y}$ is defined as
$$J(\mathbf{y})=H(\mathbf{y}_g)-H(\mathbf{y})$$
where $\mathbf{y_g}$ is the gaussian random variable of the same correlation matrix as $\mathbf{y}$.

Here we introduce a fixed-point algorithm searching for the solution to the optimisation problem outlined in Hyvärinen et al. (2001), while we leave the statistical analysis and the detail choice of functions to later sections.

We propose the following iterative scheme which gives a solution to \eqref{nongaussianopt}:

\begin{algorithm}
	\caption{Maximisation of nongaussianity in one independent component}
	\begin{algorithmic}[1]
		\State Initialise $\mathbf{w}_0$ of unit norm
		\State $\mathbf{w}_{k+1} \gets \E[\mathbf{x}^Tf(\mathbf{w}_k^T\mathbf{x})]-\E[f'(\mathbf{w}_k^T\mathbf{x})]\mathbf{w}_k$
		\State $\mathbf{w}_k \gets \frac{\mathbf{w}_k}{\lVert \mathbf{w}_k \rVert}$ \Comment{If not converge, go back to 2}
		\State \textbf{return} $\mathbf{w}_k$ 
	\end{algorithmic}
\end{algorithm}

First initialise the vector $\mathbf{w}_0$ of unit norm. Proceed by taking
$$\mathbf{w}_{k+1} = \E[\mathbf{x}^Tf(\mathbf{w}_k^T\mathbf{x})]-\E[f'(\mathbf{w}_k^T\mathbf{x})]\mathbf{w}_k$$ for any $k \in \N_0$ and $f$ is an objective function. Then at each step normalise $\mathbf{w}_k \leftarrow \frac{\mathbf{w}_k}{\lVert \mathbf{w}_k \rVert}$. Then repeat until the increment of $\E[\mathbf{x}^Tf(\mathbf{w}_k^T\mathbf{x})]-\E[f'(\mathbf{w}_k^T\mathbf{x})]\mathbf{w}_k$ is within a preset tolerance level $\delta \in \R$ \cite{hyvarinen2000independent}.

Suppose that we have already chosen an appropriate objective function $\mG \in \pazocal{C}^2$. The choice of a function $f: \R \to \R$ and its derivative $f'$ will be discussed later, yet notice that they are not exactly the direct derivatives of the function $\mG$.

With a recap of the key concepts and tools, we are ready to introduce the whole problem setting and the main algorithm for the application of blind source separation and optimisation on cointegration.

\section{Test cointegration using  global optimisation} \label{sec:optimization}

\subsection{Problem setting} \label{sec:probstn}

\hfill

When dealing with cointegration between time series, one needs to find a stationary linear combination of several time series. Here we adopt the following notation.

Let $(s_{1t}), (s_{2t}), \ldots, (s_{nt})$ be $n$ nonstationary time series. A vector $\boldsymbol{\beta}=(\beta_1,\beta_2, \ldots, \beta_n)^{T}$ is called a cointegration vector if the linear combination of the time series, denoted by $(z_t)$ where $z_t=\sum_{i=1}^{n}{\beta_is_{it}}$ is stationary.

Notice that in the case of cointegration for two variables, the (nontrivial) linear combination which produces a cointegration vector is unique up to a scalar multiplication. Hence, in practical analysis, we usually normalise one fixed entry position to be one. Consider a cointegration vector $\boldsymbol{\beta}=(\beta_1, \beta_2)^T$, we may normalise by setting $\hat{\boldsymbol{\beta}}=(1, \hat{\beta_2})^T$.

\subsection{Global optimisation for one signal and one noise.} \label{sec:opt2*2}

\hfill

Let $s_{1t}$ and $s_{2t}$ be two non-stationary time series. Let $r_t$ be an ARIMA$(p,d,q)$ process. Let $\varepsilon_t$ be a white noise series. Consider the following system
\begin{equation}\label{eqn:2*2CointegrationRepresentation}
	\begin{cases}
		s_{1t} = a_{11}r_t + a_{12}\varepsilon_t, \\
		s_{2t} = a_{21}r_t + a_{22}\varepsilon_t.
	\end{cases}\,
\end{equation}

In other words, $\mathbf{s}_t=A\begin{pmatrix} \mathbf{r}_t \\ \varepsilon_t \end{pmatrix}$. We aim to blindly separate the mixing matrix $A = (a_{ij})_{2 \times 2}$ and obtain the recovered series $r_t$, and then find the cointegration coefficients (i.e. a stationary linear combination $z_t = \beta_1 s_{1t} + \beta_2 s_{2t}$). 

For a 2-dimensional model, we use the decorrelation method \cite{xin2014mathematical}. The problem can be reduced to a quadratic equation
$$a\tan^2{(\theta)} + b\tan{(\theta)} + c = 0$$ where we have the parametrisation $a_{11} = \sin{\phi}, a_{12} = \sin{\theta}, a_{21} = \cos{\phi}, a_{22} = \cos{\theta}$ and $C_{n}^{ij} = \E[s_{it}s_{j,t-n}]$ for $i,n = 1,2$. For the coefficients, we have
\begin{align*}
	& a = C_1^{21}C_2^{22} - C_1^{22}C_2^{21}, \\
	& b = C_1^{22}C_2^{11} + C_1^{12}C_2^{21} - C_1^{21}C_2^{12} - C_1^{11}C_2^{22},\\
	& c = C_1^{11}C_2^{12} - C_1^{12}C_2^{11}.
\end{align*} 

Upon solving the quadratic equations we get the mixing matrix $A$ and hence the recovered signal series $r_t$ and $\epsilon_t$.

For the cointegration vector, consider
\begin{equation*}
	\E[z_t^2]=\E[\big(\beta_1(a_{11}r_t+a_{12}\varepsilon_t)+\beta_2(a_{21}r_t+a_{22}\varepsilon_t)\big)^2].
\end{equation*} 

While $r_t$, $\varepsilon_{1t}$ and $\varepsilon_{2t}$ are mutually noncorrelated. Therefore, we have
\begin{align}
	& \E[z_t^2] =  \{(\beta_1a_{11}+\beta_2a_{21})^2\E[r_t^2]+\beta_1^2a_{12}^2\E[\varepsilon_t^2]+\beta^2a_{22}^2\E[\varepsilon_t^2]\}.
\end{align} 

Since $r_t$ is nonstationary, if we have $\beta_1a_{11}+\beta_2a_{21} = 0$, then
$$z_t = (\beta_1a_{12}+\beta_2a_{22})\varepsilon_t$$

becomes a stationary process.

The criterion $\beta_1a_{11}+\beta_2a_{21} = 0$ can also been seen from

$$A\boldsymbol{\beta} = \mb a_{11}&a_{21}\\ a_{12}&a_{22} \me \mb \beta_1\\ \beta_2 \me = \mb 0 \\ * \me$$
The cointegration vector is determined up to scaling of some component(s). In particular, we may assume without loss of generality that $\beta_2 = 1$, since for the criterion to be satisfied we have to avoid the trivial case where $\beta_1 = \beta_2 = 0$. Now the global optimisation algorithm is carried out by 
\begin{align}\label{eqn:Min4CoIntegratedVector5}
	\argmin_{\beta_1,\beta_2} & \E[(\beta_1s_{1t} + \beta_2s_{2t})^2].  
\end{align}
which can be done easily.

\subsection{Optimisation problems for multiple signals.} \label{sec:optn*n}

\hfill

For a generalised version where one has multiple non-stationary processes, if we have $(n-1)$ mutually uncorrelated nonstationary autoregressive integrated moving average (ARIMA) processes  $r_{1t}, r_{2t},\ldots, r_{n-1,t}$ with an order of integration $I(d)$, $d\geq1$ \cite{tong1990non,tsay2005analysis,taylor2008modelling}. In statistics, the order of integration, denoted $I(d)$, means the minimum number of differences required to obtain a covariance-stationary series. 
Let $\varepsilon_t$ be an independent Gaussian process (stationary series). Suppose there is a nonsingular matrix $A=(a_{ij})_{n\times n}$ such that $\mathbf{s}_t=A\begin{pmatrix} \mathbf{r}_t \\ \varepsilon_t \end{pmatrix}$, where $\mathbf{s}_t=(s_{1t}, \ldots, s_{nt})^T \in \R^n$ and $\mathbf{r}_t=(r_{1t}, \ldots, r_{n-1,t})^T \in \R^{n-1}$. 

To find the cointegration vector  
$\boldsymbol{\beta}=(\beta_1,\beta_2, \ldots, \beta_n)^{T}$ such that $z_t= \sum_{i=1}^{n}{\beta_is_{it}}$ is stationary, we solve the minimisation problem 
\begin{align}\label{eqn:Min4CoIntegratedVector}
	& \argmin_{\boldsymbol{\beta}} \E[z_t^2] \nonumber \\
	= &\argmin_{\boldsymbol{\beta}} \E\left[\left(\sum_{i=1}^n {\beta_i\left(\sum_{j=1}^{n-1}{a_{ij}r_{jt}+a_{in}\varepsilon_{t}}\right)}\right)^2\right] \\
	= &\argmin_{\boldsymbol{\beta}}
	\sum_{j=1}^{n-1}{\left(\sum_{i=1}^{n}{\beta_ia_{ij}}\right)^2\E[r_{jt}^2]} + \sum_{i=1}^{n}{\beta_i^2a_{in}^2\E[\varepsilon_t^2]}.
\end{align} 
since $r_{1t}, r_{2t}, \ldots, r_{n-1,t}$, and $\varepsilon_t$ are mutually uncorrelated. Furthermore, since $r_{1t}, r_{2t}, \ldots, r_{n-1,t}$ are non-stationary, we must have $\sum_{i=1}^{n}{\beta_ia_{ij}}=0$  for any $j = 1, 2, \ldots, n-1$. This gives $z_t = \sum_{i=1}^{n}{\beta_ia_{in}\varepsilon_t}$ which is clearly a stationary process. 

Since $A$ is a nonsingular matrix, there must exist an invertible submatrix by deleting one row and one column from $A$ (note that if all submatrices of a matrix are singular, then the matrix itself has to be singular). By setting the corresponding $\beta_k= 1$, where k is between $1$ and $n$, then
\begin{align}\label{eqn:EquationsFromMinization2}
	\sum_{1 \le i \le n, i \neq k}{\beta_ia_{ij}} = -a_{nj}
\end{align} 
for any $ j = 1, 2, \ldots, n-1$. By the invertibility of the submatrix, such system must have a unique solution.

One may also notice that the least square problem
\begin{equation}
	\argmin_{\beta_1,\beta_2, \ldots, \beta_{n-1}}  \E[(\sum_{i=1}^{n-1}{\beta_is_{it}})^2]  = \nonumber
	\argmin_{\beta_1,\beta_2, \ldots, \beta_{n-1}}  F(\beta_1,\beta_2, \ldots, \beta_{n-1}).
\end{equation} becomes a convex quadratic programming problem, which can be solved easily, and explicitly by solving
$\frac{\partial}{\partial_{\beta_i}}F(\beta_1,\beta_2, \ldots, \beta_{n-1})=0$ for all $i = 1, 2, \ldots, n-1$ as a $(n-1)\times (n-1)$ linear system.

The above is a natural extension of lower-dimensional cases. The underlying problem is, however, the separation of higher order mixtures by blind source separation is not quite the same as decorrelation, as generally decorrelation reduces the system into polynomials of corresponding degree of the system, while it is not practical to search for all the roots and apply source recovery. To deal with higher-order systems, common approaches in source separation include information maximisation method, but such method fail to identify the desired stationary series which is essential for our analysis. Here we adopt maximising nongaussianity \cite{hyvarinen2000independent} as introduced in Section \ref{subsec:maxnong}. The reason is that such method is able to identify gaussianity of signals and mixtures, which are essentially the key subject we intend to separate out from a time series.

Consider the following optimisation problem
\begin{equation}\label{nongaussianopt2}
	\begin{aligned}
		\hat{W}=\max_{W} \quad & \E[\mG(W\mathbf{s}_t)]\\
		\textrm{s.t.} \quad & W^TW=I\\
	\end{aligned}
\end{equation}
where the objective value is a measure of nongaussianity \cite{hyvarinen2000independent} as introduced in \ref{subsec:maxnong}, $W$ is a coefficient matrix, with each row representing a linear combination of the source signal $\mathbf{s}_t$, and $\mathbf{s}_t$ is our observed mixture of time series (objective vector time series for cointegration). Notice that the algorithm recovering one independent component could be used here after orthogonality assumptions to recover all the other independent components. An important note is that the recovered signals might be subject to permutation and scaling, but this is nowhere problematic in our analysis since the essential part is the separated stationary series, and cointegration which naturally comes after the separation.

In order to derive a cointegration vector, we aim to find a linear combination of the components of a series such that it is stationary. From \eqref{nongaussianopt} it only remains to test for stationary of the components of $\hat{W}\mathbf{s}_t$.

The algorithm for maximisation of nongaussianity focuses on one-dimensional properties of a random variable, while the time series variables that we are interested in are of higher dimensions. Here we introduce how orthogonality is used to construct optimisation componentwisely through the random variables, and one may consider below the final algorithm for cointegration in higher dimensions \cite{hyvarinen2000independent}.

\begin{algorithm}
	\caption{Maximisation of nongaussianity in all independent components}
	\begin{algorithmic}[1]
		\State Set $i \gets 1$
		\State Initialise $\mathbf{w}_0^{(i)}$ of unit norm
		\State $\mathbf{w}_{k+1}^{(i)} \gets \E[\mathbf{x}^Tf({\mathbf{w}_k^{(i)}}^T\mathbf{x})]-\E[f'({\mathbf{w}_k^{(i)}}^T\mathbf{x})]\mathbf{w}_k^{(i)}$
		\State $\mathbf{w}^{(i)} \gets \mathbf{w}^{(i)}-\sum_{j=1}^{i-1}\left({\mathbf{w}^{(j)}}^T\mathbf{w}^{(i)}\right)\mathbf{w}^{(j)}$
		\State $\mathbf{w}_k^{(i)} \gets \frac{\mathbf{w}_k^{(i)}}{\lVert \mathbf{w}_k^{(i)} \rVert}$ \Comment{If not converge, go back to 3}
		\State $p \gets p+1$ \Comment{If $p<n$, go back to 2}
		\State \textbf{return} $\hat{W}=\left(\mathbf{w}^{(1)}, \ldots \mathbf{w}^{(n)}\right)^T$ 
	\end{algorithmic}
\end{algorithm}

Starting from one independent component, set an index variable $i=1$. For $i \in \{1, \ldots, n\}$, first initialise the vector $\mathbf{w}_0^{(i)}$ of unit norm, then apply the iterative scheme for recovering one independent component
\begin{equation}\label{mainalgorithm}
	\mathbf{w}_{k+1}^{(i)} = \E[\mathbf{x}^Tf(\mathbf{{w}_k^{(i)}}^T\mathbf{x})]-\E[f'({\mathbf{w}_k^{(i)}}^T\mathbf{x})]\mathbf{w}_k^{(i)}
\end{equation}
for any $k \in \N_0$. The at each step normalise $\mathbf{w}_k^{(i)} \leftarrow \frac{\mathbf{w}_k^{(i)}}{\lVert \mathbf{w}_k^{(i)} \rVert}$. Finally we may get a desired vector $\mathbf{w}^{(i)}$. Then orthogonalise
$$\mathbf{w}^{(i)} \leftarrow \mathbf{w}^{(i)}-\sum_{j=1}^{i-1}\left({\mathbf{w}^{(j)}}^T\mathbf{w}^{(i)}\right)\mathbf{w}^{(j)}$$
Hence, we obtain the coefficient matrix $\hat{W}=\left(\mathbf{w}^{(1)}, \ldots \mathbf{w}^{(n)}\right)^T$.

\subsection{Convergence and stability analysis} \label{sec:analysis}

\hfill

It is important to highlight that in practical applications, computing kurtosis or negentropy as a metric of nongaussianity can be challenging, necessitating the use of reliable and precise approximations to streamline the calculation process. Through the selection of suitable functions $F_i:\R \to \R$ for $i = 1$, $2$, it is possible to derive an effective approximation of negentropy, as demonstrated below:
\begin{equation}\label{approxnegentropy}
	J(\mathbf{w}^T\mathbf{s}) \approx k_1\left(\E[F_1(\mathbf{w}^T\mathbf{s})]\right)^2+k_2\left(\E[F_2(\mathbf{w}^T\mathbf{s})]-\E[F_2(v)]\right)^2
\end{equation}
where $v$ is a standardised (normal) random variable. The constants $k_i$ for $i=1$, $2$ are given by $k_i^2=\frac{1}{2\delta_i^2}$ where
\begin{align*}
	\delta_1^2 & = \int \phi(\xi)\left(F_1(\xi)\right)^2 \  d\xi -\left(\int \phi(\xi)F_1(\xi)\xi \ d\xi\right)^2 \\
	\delta_2^2 & = \int \phi(\xi)(F_2(\xi))^2 \  d\xi - \left(\int \phi(\xi)F_2(\xi) \  d\xi\right)^2 - \frac{1}{2}\left(\int \phi(\xi)F_2(\xi) \  d\xi -\int\phi(\xi)F_2(\xi)\xi^2 \ d\xi\right)^2
\end{align*}
The function $\phi(\xi)=\frac{1}{\sqrt{2\pi}}e^{-\frac{\xi^2}{2}}$ is the standardised gaussian density.

From \eqref{approxnegentropy}, in the case where we only utilise one nonquadratic function (detailed choices will be introduced shortly), we see that $\mG(x) \propto \left(\E[F(x)]-\E[F(v)]\right)^2$. By taking the derivative of $\E[\mG(\mathbf{w}^T\mathbf{s})]$ with respect to $\mathbf{w}$, we have $$\frac{\partial\left(\E[\mG(\mathbf{w}^T\mathbf{s})]\right)}{\partial\mathbf{w}}\propto\left(\E[F(\mathbf{w}^T\mathbf{s})]-\E[F(v)]\right)\E\left[\mathbf{s}f(\mathbf{w}^T\mathbf{s})\right]$$
where $f: \R \to \R$ is the derivative of $F$.

Hence, naturally we can develop a fixed point algorithm by
$$\mathbf{w} \leftarrow \E[\mathbf{s}f(\mathbf{w}^T\mathbf{s})]$$
To alleviate unsatisfactory convergence properties, consider the following operation:
\begin{align}
	\mathbf{w} & = \E[\mathbf{s}f(\mathbf{w}^T\mathbf{s})] \\
	(1+\alpha)\mathbf{w} & = \E[\mathbf{s}f(\mathbf{w}^T\mathbf{s})] + \alpha\mathbf{w}
\end{align}
Since normalisation comes in each step of the iteration, the modification still produces a fixed point iteration with the same fixed points \cite{hyvarinen2000independent}. Hence, it remains to find the value of $\alpha$ which induces a fixed point algorithm with faster convergence properties.

The key idea here is to utilise Newton's method. By our optimisation problem \eqref{nongaussianopt}, the analytical solution is solvable by using Lagrangian:
\begin{equation}\label{lagrangian}
	\E[\mathbf{s}f\left(\mathbf{w}^T\mathbf{s}\right)]+\lambda\mathbf{w}=\mathbf{0}
\end{equation}

Now define $F: \R^n \to \R^n$ by the expression on the left-hand-side, $F(\mathbf{w})=\E[\mathbf{s}f\left(\mathbf{w}^T\mathbf{s}]\right)]+\lambda\mathbf{w}$. We then proceed to solve \eqref{lagrangian} by the Newton's method. After some algebraic manipulations, we obtain
$$\mathbf{w} \leftarrow \E[\mathbf{s}f\left(\mathbf{w}^T\mathbf{s}\right)]-\E[f'\left(\mathbf{w}^T\mathbf{s}\right)]\mathbf{w}$$
which is exactly our fixed point algorithm.

Let $D = \overline B(0,1) \in \R^n$. A continuous function $\mathbf{G}: D \mapsto D$ , $\mathbf{G}=\left(g_1, \ldots, g_n\right)$ has a fixed point in $D$, if there exists $K < 1$ such that 
$$\left\lvert\frac{\partial g_i(x)}{\partial x_j}\right\rvert \le \frac{K}{n}$$
for any $\mathbf{x} \in D$ and for any $j = 1, 2, \ldots , n$. Notice that the above follows from the Brouwer fixed point theorem.

Then $\mathbf{G}$ has a fixed point which can be found by the iterative scheme $\mathbf{x}_{k+1}\leftarrow \mathbf{G}\left(\mathbf{x}_k\right)$ and the sequence converges to $\mathbf{x}^* \in D$ such that 
$$\left\lVert\mathbf{x}_k-\mathbf{x}^*\right\rVert_{\infty} \le \frac{K^k}{1-K}\left\lVert\mathbf{x}_1-\mathbf{x}_0\right\lVert_{\infty}$$

Setting $\mathbf{G}(\mathbf{w})=\E[\mathbf{s}f\left(\mathbf{w}^T\mathbf{s}\right)]-\E[f'\left(\mathbf{w}^T\mathbf{s}\right)]\mathbf{w}$, then $\frac{\partial g_i(\mathbf{w})}{\partial w_i}=-\E\left[f'\left(\mathbf{w}^T\mathbf{s}\right)\right]$. Hence it suffices to choose an objective function with bounded derivatives. Using the algorithm introduced in \ref{subsec:maxnong}, we may get the desired solution for \eqref{nongaussianopt}. Notice that the fixed point algorithm guarantees the order of convergence to be at least quadratic \cite{hyvarinen2000independent}. In fact, for the objective function $F$, we will be able to show that any nonquadratic function can be used in the performance of independent component analysis. 


Finally, for the approximating functions, the following choices are proved to be robust and easy to compute:
\begin{align}
	F^{(1)}(t) & = \frac{1}{a}\log\cosh{at} \label{approxf1}\\
	F^{(2)}(t) & = -e^{-\frac{t^2}{2}} \label{approxf2}
\end{align}
where the constant $a \in (0,1]$. The derivatives are hence $f_1(t)=\frac{dF_1}{dt}=\tanh(at)$, $f_1'(t)=\frac{d^2F_1}{dt^2}=a\left(1-\tanh^2(at)\right)$, $f_2(t)=\frac{dF_2}{dt}=te^{-\frac{t^2}{2}}$, $f_2'(t)=\frac{d^2F_2}{dt^2}=(1-t^2)e^{-\frac{t^2}{2}}$.

The following idea demonstrated in \cite{hyvarinen2000independent} give us the theorems that pave the way for the stability of the algorithm. Notice that the original setting requires stationarity of the source signal and the whitening of the mixed signal. However, it is possible to alleviate such requirements by imposing some other conditions which are easy to be handle in practice \cite{chan2013advantages}.

\begin{thm}\label{stabilitythm1}
	Let $F$ be a smooth even function. Suppose that $\mathbf{s}_t=A\mathbf{r}_t$, $\mathbf{w}^T\mathbf{s}_t=\mathbf{w}^TA\mathbf{r_t}:=\mathbf{q}^T\mathbf{r}_t$ 
	Furthermore, suppose that the following conditions hold:
	\begin{enumerate}
		\item There exists $C$ such that $\E[r_{jt}] \le C$. More precisely, for any data frame $T>0$, there exists $T_0>0$ such that for any $T>T_0$, there exists $c_T \ge c_0 \ge 0$ such that
		$$ \frac{\left\lvert\sum_{t=1}^T r_{it}\right\lvert}{T}\le c_0$$
		\item For any differentiable univariate function $f$ and $g$,
		$$\frac{\left\lvert\sum_{t=1}^Tg(r_{it})f(r_{it})-\sum_{t=1}^Tg(r_{it})\sum_{t=1}^Tf(r_{it})\right\lvert}{T} \le c_T$$
	\end{enumerate}
	Then the local optimum of the value of $\E[F(\mathbf{w}^T\mathbf{s}_t)]$ subject to the condition $\lVert \mathbf{w} \rVert=1$, i.e. $\mathbf{w}^T\mathbf{w}=1$, so that $\mathbf{q}^T\mathbf{q}=\eta$ for some $\eta \in \R$, include those rows of $WA$ such that the corresponding independent components $r_{i}$ satisfy
	\[
	\E\left[\sum_{j\neq i}f'(\eta r_{it})r_{jt}^2-f(\eta r_{it})r_{it}\right]
	\begin{cases}
		>0 & \text{for maxima} \\
		<0 & \text{for minima}
	\end{cases}
	\]
	where $f$ and $f'$ are respectively the first and second derivative of the objective function $F$ used for the approximation of negentropy $\mG$ \cite{chan2013advantages}.
\end{thm}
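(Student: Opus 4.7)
The plan is to apply the method of Lagrange multipliers to the constrained problem $\max/\min \E[F(\mathbf{w}^T\mathbf{s}_t)]$ subject to $\mathbf{w}^T\mathbf{w}=1$, and then perform the change of variables $\mathbf{q} = A^T\mathbf{w}$ so that the objective becomes $\E[F(\mathbf{q}^T\mathbf{r}_t)]$ expressed in the coordinate system of the independent sources. The central insight to exploit is that critical points should correspond to $\mathbf{q}$ aligned with a single coordinate axis $\mathbf{e}_i$, since in that case $\mathbf{q}^T\mathbf{r}_t$ depends on only one source $r_{it}$ and the mixed moments involving other sources decouple via the independence structure.

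First I would form the Lagrangian $L(\mathbf{w},\lambda) = \E[F(\mathbf{w}^T\mathbf{s}_t)] - \tfrac{\lambda}{2}(\mathbf{w}^T\mathbf{w}-1)$ and derive the stationarity equation $\E[\mathbf{s}_t f(\mathbf{w}^T\mathbf{s}_t)] = \lambda\mathbf{w}$. Rewriting via $\mathbf{s}_t = A\mathbf{r}_t$ and multiplying by $A^{-1}$ turns this into $\E[\mathbf{r}_t f(\mathbf{q}^T\mathbf{r}_t)] = \lambda\, A^{-1}\mathbf{w}$. Substituting the candidate $\mathbf{q}=\sqrt{\eta}\,\mathbf{e}_i$, the $j$-th component for $j\neq i$ on the left-hand side becomes $\E[r_{jt} f(\sqrt{\eta}\, r_{it})]$; I would invoke hypothesis (2) to bound the deviation of this sample product from $\E[r_{jt}]\,\E[f(\sqrt{\eta}\,r_{it})]$ by $O(c_T)$, and then hypothesis (1) to control $\frac{1}{T}\sum_t r_{jt}$ by $c_0$, so that the first-order condition holds up to the prescribed finite-sample error.

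Second, for the second-order (stability) analysis I would perturb $\mathbf{q}$ along the constraint surface $\{\mathbf{q}:\mathbf{q}^T\mathbf{q}=\eta\}$ at the candidate $\sqrt{\eta}\,\mathbf{e}_i$. Writing a unit tangent direction as $\boldsymbol{\xi}\perp\mathbf{e}_i$, a second-order Taylor expansion of $F(\mathbf{q}^T\mathbf{r}_t)$ produces the term $\E[f'(\sqrt{\eta}\,r_{it})(\boldsymbol{\xi}^T\mathbf{r}_t)^2]$, and the Lagrange multiplier $\lambda$ obtained from the first-order condition contributes a curvature correction proportional to $\E[f(\sqrt{\eta}\,r_{it})r_{it}]/\sqrt{\eta}$. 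Summing the bordered-Hessian quadratic form over an orthonormal basis of tangent directions $\{\mathbf{e}_j\}_{j\neq i}$ and again invoking hypotheses (1)--(2) to discard cross moments between distinct sources yields the combined expression $\E\bigl[\sum_{j\neq i} f'(\eta\, r_{it})r_{jt}^2 - f(\eta\, r_{it})r_{it}\bigr]$, whose sign then classifies local maxima from local minima by the standard second-order sufficient conditions.

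The main obstacle I anticipate is the bookkeeping in the second-order analysis on the constraint manifold: one must project the Hessian of $\E[F]$ onto the tangent space determined by $\mathbf{w}^T\mathbf{w}=1$ transported through $\mathbf{q}=A^T\mathbf{w}$, which is generally non-Euclidean since $AA^T \neq I$ in the non-whitened setting, and one must match the Lagrange-multiplier correction to the intrinsic geometry correctly. Additionally, replacing the classical whitening assumption by the finite-sample conditions (1)--(2) means every step involving a decomposition $\E[g(r_{it})h(r_{jt})]\approx \E[g(r_{it})]\E[h(r_{jt})]$ costs an error that must be tracked uniformly over the perturbation direction $\boldsymbol{\xi}$; ensuring these accumulated errors cannot flip the sign of the leading quadratic form is the most delicate step.
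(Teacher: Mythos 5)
Your proposal is correct and follows essentially the same route as the paper: a second-order Taylor expansion of $H(\mathbf{q})=\E[F(\mathbf{q}^T\mathbf{r}_t)]$ at $\mathbf{q}=\eta\mathbf{e}_i$ restricted to the sphere $\mathbf{q}^T\mathbf{q}=\eta$, with hypotheses (1)--(2) used to discard the cross moments, yielding the quadratic form $\tfrac{1}{2}\bigl(\sum_{j\neq i}\E[f'(\eta r_{it})r_{jt}^2]-\E[f(\eta r_{it})r_{it}]\bigr)\sum_{j\neq i}\epsilon_j^2$. The only cosmetic difference is that you carry the radial correction through the Lagrange multiplier $\lambda$ in a bordered-Hessian computation, whereas the paper substitutes the constraint directly by solving $\epsilon_i=-\tfrac{1}{2}\sum_{j\neq i}\epsilon_j^2$ into the first-order term; these are the same calculation.
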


\begin{proof}
	Denote $H(\mathbf{w}^T\mathbf{s}_t)=H(\mathbf{w}^TA\mathbf{r}_t)=H(\mathbf{q}^T\mathbf{r}_t):=\E\left[F(\mathbf{w}^T\mathbf{s}_t)\right]=\E[F(\mathbf{q}^T\mathbf{r}_t)]$. Differentiating with respect to $\mathbf{q}$ we have
	$\frac{\partial H(\mathbf{q})}{\partial \mathbf{q}}=\E\left[\mathbf{r}_tf(\mathbf{q}^T\mathbf{r}_t)\right]$ and $\frac{\partial^2 H(\mathbf{q})}{\partial \mathbf{q}^2}=\E\left[\mathbf{r}_t\mathbf{r}_t^Tf'(\mathbf{q}^T\mathbf{s}_t)\right]$.
	Hence by Taylor expansion around $\eta\mathbf{e}_i+\boldsymbol{\epsilon}$ given that $c_T \le \lVert \boldsymbol{\epsilon}\rVert^2$ we have
	\begin{align*}
		& H\left(\eta\mathbf{e}_i+\boldsymbol{\epsilon}\right) \\
		& = H(\eta\mathbf{e}_i)+\E\left[\mathbf{r}_t^Tf(\eta r_{it})\right]\boldsymbol{\epsilon}+\frac{1}{2}\boldsymbol{\epsilon}^T\E\left[\mathbf{r}_t\mathbf{r}_t^Tf'(\eta r_{it})\right]\boldsymbol{\epsilon}+o(\lVert \boldsymbol{\epsilon}\rVert^2) \\
		& = H(\eta\mathbf{e}_i)+ \E\left[r_{jt}f(\eta r_{it})\right]\epsilon_j + \sum_{j\neq i}\E\left[r_{jt}f(\eta r_{it})\right]\epsilon_j +\frac{1}{2}\E\left[f'(\eta r_{it})r_{it}^2\right]\epsilon_i^2 \\
		& +\frac{1}{2}\sum_{j\neq i}\sum_{l \neq i,j}\E[f'(\eta r_{it})r_{jt}r_{lt}]\epsilon_j\epsilon_l+\frac{1}{2}\sum_{j \neq i}\E\left[f'(\eta r_{it})r_{jt}^2\right]\epsilon_j^2+o(\lVert \boldsymbol{\epsilon}\rVert^2) 
	\end{align*}
	By the two conditions we thus have
	\begin{align*}
		& H\left(\eta\mathbf{e}_i+\boldsymbol{\epsilon}\right) \\
		& \le H(\eta\mathbf{e}_i)+ \E\left[r_{jt}f(\eta r_{it})\right]\epsilon_j+\sum_{j\neq i}\E\left[f(\eta r_{it})\right]c_T\epsilon_j +\frac{1}{2}\E\left[f'(\eta r_{it})r_{it}^2\right]\epsilon_i^2 \\
		& +\frac{1}{2}\sum_{j\neq i}\sum_{l \neq i,j}\E[f'(\eta r_{it})]c_T^2\epsilon_j\epsilon_l+\frac{1}{2}\sum_{j \neq i}\E\left[f'(\eta r_{it})r_{jt}^2\right]\epsilon_j^2+o(\lVert \boldsymbol{\epsilon}\rVert^2) 
	\end{align*}
	By the condition that $\mathbf{q}^T\mathbf{q}=\eta$, we can write $\epsilon_i=\sqrt{\left(\eta-\sum_{j\neq i}\epsilon_j^2\right)}-\eta$. 
	
	Since $(\eta-\gamma)^{1/2}=\eta-\gamma/2+o(\gamma^2)$, $\epsilon_i=-\frac{1}{2}\sum_{j \neq i}\epsilon_j^2$, which also implies that $\epsilon_i^2$ is of $o(\lVert \boldsymbol{\epsilon}\rVert^2)$. Finally, we conclude that
	\begin{align*}
		& H\left(\eta\mathbf{e}_i+\boldsymbol{\epsilon}\right) \\
		& \le H(\eta\mathbf{e}_i)+\frac{1}{2}\left(\sum_{j \neq i}\E\left[f'(\eta r_{it})r_{jt}^2\right]-\E\left[f(\eta r_{it})r_{it}\right]\right)\sum_{j \neq i}\epsilon_j^2+o(\lVert \boldsymbol{\epsilon}\rVert^2)
	\end{align*}
	which proves that $\eta \mathbf{e}_i$ is indeed the optimum point of $H$, i.e. $\E[F(\mathbf{w}^T\mathbf{s}_t)]$.
\end{proof}

Notice that the sign of the expectation of the random variable illustrated in Theorem \ref{stabilitythm1} implies how the objective function $F$ characterises the distribution of the random variable and we wish to maximise $\E[F(\mathbf{w}^T\mathbf{z})]$ in one category and minimise in another. Hence, we would have the following

\begin{thm}
	Let $\mathbf{z}=W\mathbf{s}=WA\mathbf{r}$ be a whitened data and $F$ is a smooth even function. Then the asymptotically stable points of the algorithm 1 in Section \ref{subsec:maxnong} include the i-th row of the inverse of the whitened mixing matrix $WA$ such that the corresponding independent components $r_i$ satisfy
	$$\E[r_if(r_i)-f'(r_i)]\left\{\E[F(r_i)]-\E[F(v)]\right\}>0$$
	where $v$ is a standardised gaussian variable.
\end{thm}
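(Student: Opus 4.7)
The plan is to reduce the claim to a Hessian-sign computation for the negentropy approximation, and then to transfer that characterisation to the dynamics of the normalised fixed-point iteration. First, I would change coordinates via $\mathbf{q} = (WA)^T \mathbf{w}$: because $\mathbf{z}$ is whitened, $WA$ is orthogonal, so this is an isometry of the unit sphere and conjugates the iteration to $\mathbf{q} \leftarrow \E[\mathbf{r} f(\mathbf{q}^T \mathbf{r})] - \E[f'(\mathbf{q}^T \mathbf{r})]\mathbf{q}$, followed by normalisation. The candidate fixed points in these coordinates are $\pm \mathbf{e}_i$, which pull back to the $i$-th row of $(WA)^{-1}$ in $\mathbf{w}$-coordinates, as asserted.

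Next, I would classify each $\mathbf{e}_i$ as a constrained extremum of the auxiliary objective $H(\mathbf{q}) := \E[F(\mathbf{q}^T \mathbf{r})]$ on $\|\mathbf{q}\|=1$. Applying Theorem~\ref{stabilitythm1} with $\eta=1$ and using the independence of the $r_j$ together with their unit variance coming from whitening, the transverse second-order coefficient at $\mathbf{e}_i$ collapses to $\E[f'(r_i)] - \E[r_i f(r_i)]$. Hence $\mathbf{e}_i$ is a local maximum of $H$ iff $\E[r_i f(r_i) - f'(r_i)] > 0$, and a local minimum otherwise. Since the algorithm actually targets the negentropy approximation $J(\mathbf{q}) \propto (H(\mathbf{q}) - \E[F(v)])^2$ and $\nabla H(\mathbf{e}_i) = 0$, the constrained Hessian of $J$ at $\mathbf{e}_i$ equals $2k(\E[F(r_i)] - \E[F(v)]) \cdot \nabla^2 H(\mathbf{e}_i)$ on the tangent space. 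Therefore $\mathbf{e}_i$ is a local maximum of $J$ precisely when the two scalars $\E[r_i f(r_i) - f'(r_i)]$ and $\E[F(r_i)] - \E[F(v)]$ carry the same sign, i.e.\ exactly when their product is strictly positive, which is the inequality in the statement.

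Finally, I would match this geometric characterisation with dynamical asymptotic stability of the iteration $T$. The update is the Newton step for the Lagrangian of $H$ on the sphere, so a direct Taylor expansion of $T(\mathbf{e}_i + \boldsymbol{\epsilon})$, reusing the algebra already carried out in the proof of Theorem~\ref{stabilitythm1}, shows that the radial component is absorbed by normalisation while the transverse components are reproduced to leading order with coefficient zero — the familiar quadratic convergence of FastICA, consistent with the contraction estimate $|\partial g_i/\partial w_j|\le K/n$ used earlier in this section. The main obstacle is then to distinguish, among these quadratic attractors of $H$, the ones that are actually attractors for the objective $J$: this is exactly where the sign of $\E[F(r_i)]-\E[F(v)]$ enters, combining with the Hessian sign of $H$ to yield the product condition above. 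To close the argument I would observe that for any fixed point violating the product inequality, the Hessian computation forces $\mathbf{e}_i$ to be a saddle or minimum of $J$, so a generic perturbation of $\mathbf{w}_0$ yields an iterate moving in a direction of increasing $J$ and escapes the basin, leaving precisely the claimed set as the asymptotically stable fixed points.
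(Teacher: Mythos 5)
Your proposal is correct and follows essentially the route the paper intends: the paper states this theorem without proof, presenting it as a consequence of Theorem \ref{stabilitythm1} together with the remark that the negentropy approximation is a squared deviation from the Gaussian value, so that maxima of $H$ with $\E[F(r_i)]>\E[F(v)]$ and minima with $\E[F(r_i)]<\E[F(v)]$ both become maxima of $J$ --- exactly your product-of-signs argument via the constrained Hessian identity $\nabla^2 J = 2\left(\E[F(r_i)]-\E[F(v)]\right)\nabla^2 H$ on the tangent space at a critical point. The only soft spot is your final dynamical step (the linearisation of the normalised fixed-point map at $\pm\mathbf{e}_i$ vanishes on the tangent space, so ``escaping the basin'' under a generic perturbation needs more care), but this gap is shared with the cited source \cite{hyvarinen2000independent} and with the paper itself, which supplies no proof at all.
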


The maximization of nongaussianity approach naturally handles higher-dimensional cointegration systems by separating all independent components simultaneously. Each recovered component is tested for stationarity using the Augmented Dickey-Fuller test, with the stationary components corresponding to cointegrating relationships. The cointegration vectors are directly obtained from the rows of the demixing matrix associated with stationary components.

However, notice that the application of the ADF test to components recovered via an optimization procedure may lead to non-standard asymptotic distributions of the test statistic, a problem analogous to that encountered when testing OLS regression residuals for cointegration \cite{engle1987co}. The critical values used in this paper should therefore be interpreted with caution. A more rigorous analysis would require the simulation of specific critical values for this BSS-based procedure, which we leave for future research.

\begin{re}\label{rem:gaussian_limit}
	The optimization in (3.7) relies critically on the \textit{non-Gaussianity} of the stationary components. If all stationary components follow Gaussian distributions, the negentropy measure \(J(\mathbf{y})\) becomes zero, and the objective function \(\mathscr{G}(\cdot)\) fails to identify meaningful independent components. This limitation arises because Gaussian distributions are rotationally invariant under orthogonal transformations, making separation impossible. 
	
	However, this constraint does not invalidate our method in practical applications. Firstly, financial/economic time series frequently exhibit non-Gaussian features (e.g., heavy tails from volatility clustering). Secondly, cointegration residuals often demonstrate non-normality due to structural breaks or heteroskedasticity. Lastly, the decorrelation method (Section 3.2) remains valid for Gaussian systems when second-order statistics suffice. In scenarios with suspected Gaussianity, we recommend using Johansen's test or verifying non-normality via Jarque-Bera tests prior to applying our method.
\end{re}

\section{Numerical examples}\label{sec:appl}

In this section, we apply our global optimisation on several models and data sets and evaluate its effectiveness and robustness by comparing with some classical models such as the Johansen cointegration test. Note that the primary goal here is not to beat those well-developed algorithms, but to provide a new view point for considerations when dealing with different time series problems. 

\subsection{Test for unit-root nonstationary simulated series}\label{sec:simulatedseries1}

\hfill

In this section, we carry out a Monte Carlo experiment with 1000 repetitions on a simulated time series as a demonstration of our methods. We will also use bias and mean square error (MSE) for comparison in different sample sizes.

We shall begin with a simulated series, consider the following VARMA(1,1) model:
\begin{align}
	\begin{pmatrix}
		s_{1t}  \\
		s_{2t}
	\end{pmatrix}-
	\begin{pmatrix}
		0.5 & -1.0  \\
		-0.25 & 0.5
	\end{pmatrix}
	\begin{pmatrix}
		s_{1,t-1}  \\
		s_{2,t-1}
	\end{pmatrix}=
	\begin{pmatrix}
		\varepsilon_{1t}  \\
		\varepsilon_{2t}
	\end{pmatrix}-
	\begin{pmatrix}
		0.2 & -0.4  \\
		-0.1 & 0.2
	\end{pmatrix}
	\begin{pmatrix}
		\varepsilon_{1,t-1}  \\
		\varepsilon_{2,t-1}
	\end{pmatrix}, \label{arma_data}
\end{align}
where $(\boldsymbol{\varepsilon}_t)$ is a multivariate white noise series.

The above model is unit-root nonstationary \cite{tsay2005analysis} and a time plots of the two components are shown in Figure~\ref{ex1:original}. However, it can be cointegrated in the sense that once we apply the following linear transformation upon the original series:
\begin{align}
	\begin{pmatrix}
		z_{1t}  \\
		z_{2t}
	\end{pmatrix} \equiv
	\begin{pmatrix}
		1  & -2  \\
		0.5 & 1
	\end{pmatrix}
	\begin{pmatrix}
		s_{1t}  \\
		s_{2t}
	\end{pmatrix} := L
	\begin{pmatrix}
		s_{1t}  \\
		s_{2t}
	\end{pmatrix}, \nonumber
\end{align}
\begin{align}
	\begin{pmatrix}
		\delta_{1t}  \\
		\delta_{2t}
	\end{pmatrix} \equiv
	\begin{pmatrix}
		1  & -2  \\
		0.5 & 1
	\end{pmatrix}
	\begin{pmatrix}
		\varepsilon_{1t}  \\
		\varepsilon_{2t}
	\end{pmatrix} := L
	\begin{pmatrix}
		\varepsilon_{1t}  \\
		\varepsilon_{2t}
	\end{pmatrix}, \nonumber
\end{align}
which gives
\begin{align}
	\begin{pmatrix}
		z_{1t}  \\
		z_{2t}
	\end{pmatrix}-
	\begin{pmatrix}
		1.0 & 0  \\
		0 & 0
	\end{pmatrix}
	\begin{pmatrix}
		z_{1,t-1}  \\
		z_{2,t-1}
	\end{pmatrix}=
	\begin{pmatrix}
		\delta_{1t}  \\
		\delta_{2t}
	\end{pmatrix}-
	\begin{pmatrix}
		0.4 & 0  \\
		0 & 0
	\end{pmatrix}
	\begin{pmatrix}
		\delta_{1,t-1}  \\
		\delta_{2,t-1}
	\end{pmatrix}.  \label{arma_transform_data}
\end{align}
where we can see that second component $z_{2t}$ of the transformed model is a white-noise series. We then have the cointegration vector $\boldsymbol{\beta} = (0.5,1.0)^T$ as $z_{2t}=0.5s_{1t} + 1.0s_{2t}$ is a stationary series.

Regarding the above model, we apply a blind source separation using decorrelation method on the two time series components to separate out a nonstationary time series $r_t$ and a stationary series $\varepsilon_t$ as shown in Fig.~\ref{ex1:original}.

Now, if $z_t$ denotes the cointegrated series from $s_{1t}$ and $s_{2t}$, then we can estimate the linear regression $z_t = \beta_1s_{1t} + \beta_2s_{2t}$. To obtain an estimate for $\boldsymbol{\beta} = (\beta_1,\beta_2)$, we use the least square method according to the main algorithm for one signal and one noise (Section \ref{sec:opt2*2}). We then have $\beta_1 = 0.5128$ and $\beta_2 = 1.0351$ where the cointegration vector $\boldsymbol{\beta} = (0.5128,1.0351)$. Note that in this case, a cointegration vector (if exists in bivariate settings) is unique up to scaling. Hence, we may normalise the cointegration vector to $\hat{\boldsymbol{\beta}}=(0.4954, 1.0)^T$. One may see that the relative error in our estimation with the original cointegration vector is around $0.9\%$. Note that the above example is an application of the global optimisation algorithm on a simulated series. In real situations, which will be shown in later sections, we shall apply the algorithm directly on a set of data and obtain an estimation of the cointegration vector.

In real life trading, it is generally favourble to sell overvalued securities and purchase undervalued securities. However, prices behave in a random-walk manner, which makes it difficult to be predicted, while a linear combination of series produces a stationary series, which means that the price is mean-reverting. Furthermore, mean value obtained from cointegration gives trading opportunities.

In the examples illustrated above, if we view $s_{1t}$ and $s_{2t}$ as Stock 1 and Stock 2 respectively, we may see from Fig.~\ref{ex1:original} that the prices fluctuate quite randomly and could be very hard to predict. However, the stationarity of the linear combination (or cointegrated series) $z_t = \beta_1s_{1t} + \beta_2s_{2t}$ implies that $z_t$ is mean-reverting. Now, from the portfolio $Z$ by buying $\beta_1$ share of Stock 1 and buying $\beta_2$ shares of Stock 2, the return of the portfolio $Z$ for a given period $\Delta t$ is $r(\Delta t) = z_{t + \Delta t} - z_t$, which is the increment of the stationary series $\{z_t\}$
from $t$ to $t + \Delta t$. We have obtained a direct link of the portfolio to a stationary time series whose forecasts we can predict. This also highlights the importance of cointegration vector in convergence trading.

\begin{re}\label{mean_reverting}
	The speed of mean-reverting of $z_{t}$ plays an important role in practical treading. One should consider the profit and the transaction costs.
\end{re}

\begin{figure*}[t!]
	\centering
	\begin{subfigure}[t]{0.5\textwidth}
		\centering
		\includegraphics[width=1\textwidth]{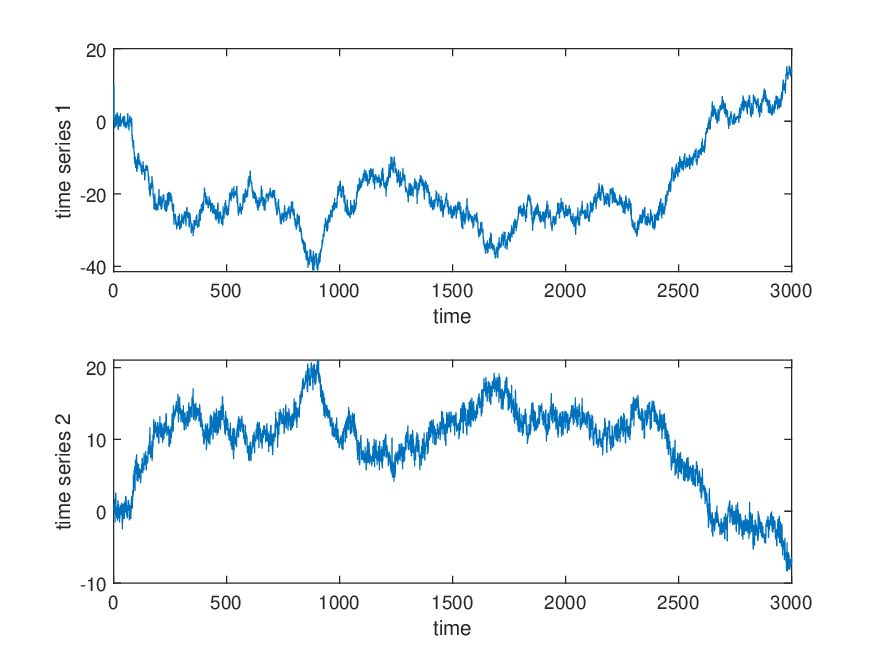}
		\caption{Time plots of the VARMA(1,1) model}
		\label{ex1:original}
	\end{subfigure}%
	~ 
	\begin{subfigure}[t]{0.5\textwidth}
		\centering
		\includegraphics[width=1\textwidth]{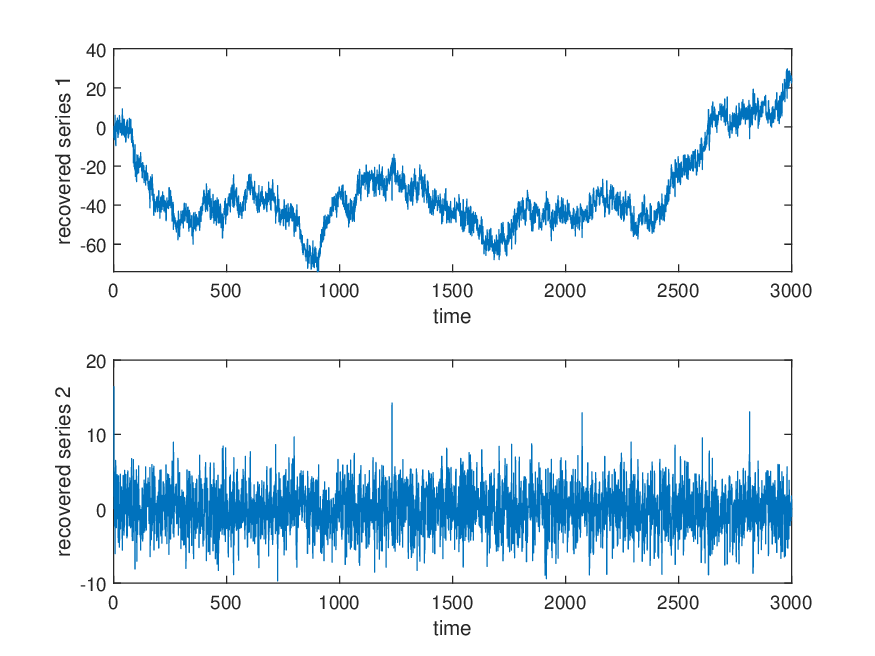}
		\caption{Time plots of recovered series}
		\label{ex1:recovered}
	\end{subfigure}
	\caption{Cointegration test by decorrelation}
\end{figure*}

We may also evaluate the cointegration vector through maximisation of nongaussianity as introduced in section \ref{subsec:maxnong} and \ref{sec:optn*n}. In view of the construction of the ARMA model \eqref{arma_data}. The optimal solutions to the optimisation problem \eqref{nongaussianopt} $\hat{W}$ are given in Table~1, where we use the smooth even functions \eqref{approxf1} and \eqref{approxf2} respectively. Fig.~\ref{fig:ex2_arma_data} - Fig.~\ref{fig:ex2_combined_data} (Notice that the time plots of the same VARMA model series could be completely different as random generative processes are included in the construction of the white noise series in the moving-average model) shows the outcome of maximisation of nongaussianity followed by the algorithm 2 in Section~\ref{sec:optn*n} using the objective function $F(x)=\frac{1}{a}\ln\cosh{(ax)}$ and taking $a=1$.
\begin{table}\label{maxnongausssimulatedseries}\caption{Test results by different objective functions}
	\begin{center}
		\begin{tabular}{|c|c|c|}
			\hline  &$F(x)=\ln\cosh(x)$&$F(x)=-e^{-\frac{x^2}{2}}$ \\
			\hline  $\hat{W}$ & $\begin{pmatrix} 0.8946 & -0.4469  \\ 0.4514 & 0.8923 \end{pmatrix}$& $\begin{pmatrix} 0.8948 & -0.4464  \\ 0.4464 & 0.8948 \end{pmatrix}$\\
			\hline
		\end{tabular}
	\end{center}
\end{table}

Notice that $\mathbf{z}_t=\mathbf{w}^T\mathbf{s}_t=\mathbf{q}^T\mathbf{r}_t$ is exactly a linear combination of the independent components of $\mathbf{r}_t$, hence it has the same number of stationary (if any) and nonstationary components of $\mathbf{r}_t$. The objective functions respectively give the desired cointegration vector $\boldsymbol{\beta}^{(1)} = (0.4514, 0.8923)^T$ and $\boldsymbol{\beta}^{(2)} = (0.4464, 0.8948)^T$ as the second component of the combined series is stationary. Notice that a cointegration vector is a linear combination of components of a time series that gives a stationary process, thus any scalar multiple of the original cointegration vector still produces a stationary linear combination, which is also a cointegration vector. Comparing our results with the analytical solution $\boldsymbol{\beta}^{(1)} = (1/2, 1)^T$, after scaling the second term to $1$, we have the following cointegration vectors: $\boldsymbol{\beta}^{(1)} = (0.5062, 1)^T$ and $\boldsymbol{\beta}^{(2)} = (0.4989, 1)^T$. The relative error is around 1.24\% and 0.22\%. The accuracy of our algorithm may rely on the choice of the objective function, which could be highly empirical. Comparing relative errors of maximising nongaussianity and that of decorrelation, one may realise difference in performances from different choices of the objective functions and one shall also consider other factors such as the random generative processes of the white noise series in the construction of the simulated series. Different performances and slight variations in the robustness of both algorithms may also be expected in different scenarios when we tackle tests in other time series. However, it can be seen that both methods provide promising and reliable results for lower-dimensional cases. 

\begin{figure*}[t!]
	\centering
	\begin{subfigure}[t]{0.5\textwidth}
		\centering
		\includegraphics[width=1\textwidth]{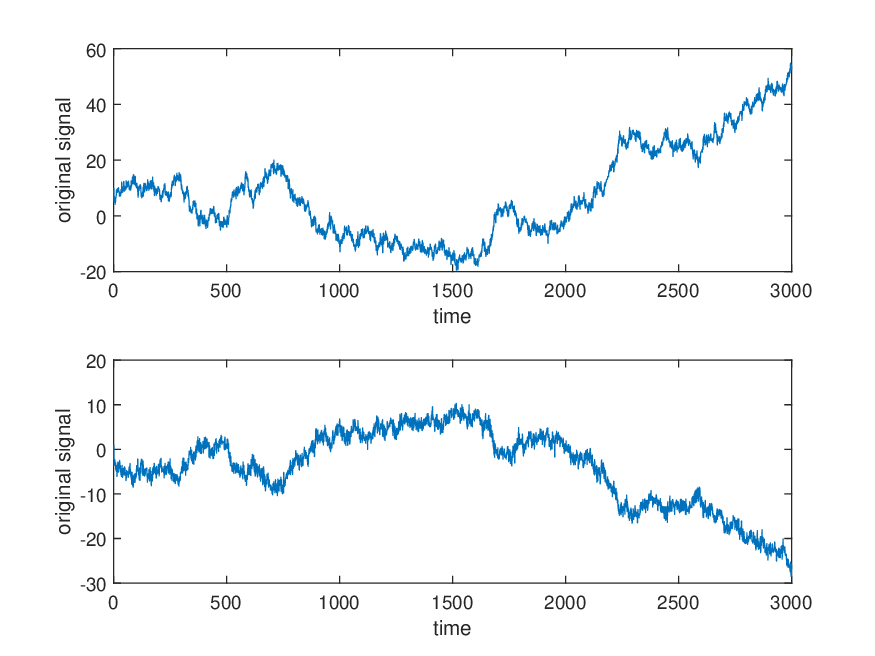}
		\caption{Time plots of the VARMA(1,1) model}
		\label{fig:ex2_arma_data}
	\end{subfigure}%
	~ 
	\begin{subfigure}[t]{0.5\textwidth}
		\centering
		\includegraphics[width=1\textwidth]{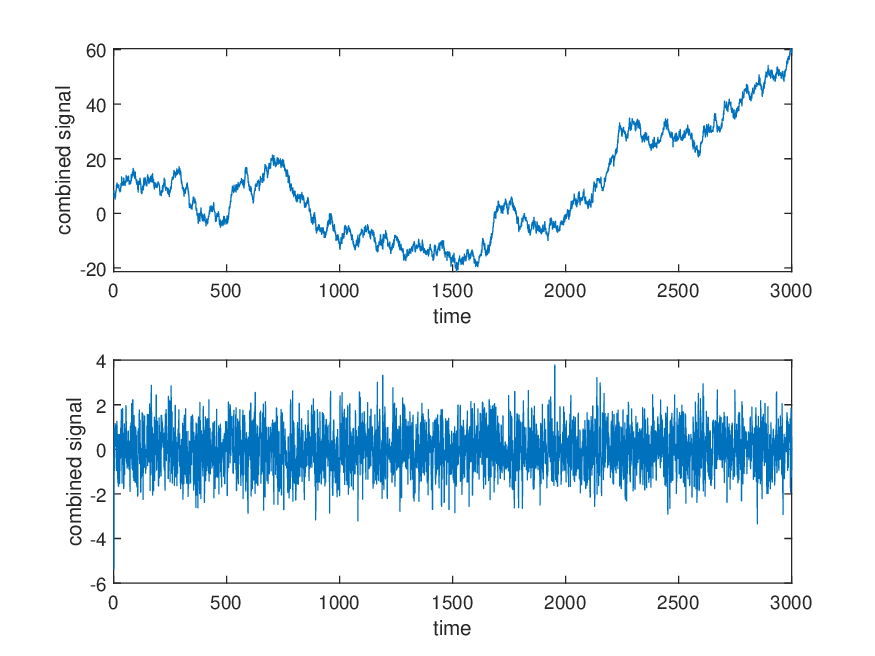}
		\caption{Time plots of combined series}
		\label{fig:ex2_combined_data}
	\end{subfigure}
	\caption{Cointegration test by maximising nongaussianity}
\end{figure*}

\subsection{Monte Carlo Simulation Results}\label{sec:monte_carlo}

\hfill

After implementing our methods on the simulated unit-root nonstationary series, we may also compare our results to existing tests such a the Johansen's Cointegration test. As shown in the Table~2. for test results by using Johansen's trace test applied on the VARMA model introduced in section \ref{sec:simulatedseries1} with 3000 observations.
\begin{table}\label{johansentable}\caption{Test results by Johansen's Cointegration Test}
	\begin{center}
		\begin{tabular}{|c|c|c|c|c|c|}
			\hline  $r$&$h$&stat&c-value&p-value&eigenvalue \\
			\hline  $0$&$1$&$2390.2790$&$15.4948$&$0.0010$&$0.5476$\\
			\hline  $1$&$0$&$11.4968$&$3.8415$&$0.0010$&$0.0038$\\
			\hline
		\end{tabular}
	\end{center}
\end{table}

The test shows cointegration of rank 1 indicating the unique cointegration vector nature in bivariate cases, and from the cointegration equations, the test exhibits the cointegration vector $(0.4445,0.8892)^T$. Normalising the cointegration vector, we have $\boldsymbol{\beta}^{(1)} = (0.4991,1)^T$.

A Monte Carlo simulation is then carried out to compare the performance of our methods and Johansen's in different sample sizes. As shown in Table~3.

\begin{table}[htbp]
	\centering
	\caption{Monte Carlo Simulation Results: Bias and MSE for Cointegration Vector Estimation}
	\label{tab:monte_carlo_results}
	\begin{tabular}{c l S[table-format=1.4] S[table-format=1.2e-1]}
		\toprule
		\textbf{Sample Size (T)} & \textbf{Method} & \multicolumn{1}{c}{\textbf{Bias($\beta_1$)}} & \multicolumn{1}{c}{\textbf{MSE($\beta_1$)}} \\
		\midrule
		10 & Decorrelation & 6.3220 & 4.64e+03 \\
		& Max. Nongaussianity & 1.4708 & 4.05e+01 \\
		& Johansen & 2.2996 & 1.00e+02 \\
		\midrule
		15 & Decorrelation & 4.0491 & 1.85e+03 \\
		& Max. Nongaussianity & 0.9306 & 3.00e+00 \\
		& Johansen & 1.3117 & 4.84e+01 \\
		\midrule
		20 & Decorrelation & 2.0884 & 5.79e+01 \\
		& Max. Nongaussianity & 1.0794 & 1.04e+02 \\
		& Johansen & 1.2554 & 2.48e+02 \\
		\midrule
		25 & Decorrelation & 3.6321 & 2.98e+03 \\
		& Max. Nongaussianity & 0.7921 & 3.62e+01 \\
		& Johansen & 0.3454 & 2.08e+00 \\
		\midrule
		30 & Decorrelation & 1.5528 & 4.47e+01 \\
		& Max. Nongaussianity & 0.5343 & 1.37e+00 \\
		& Johansen & 0.2072 & 3.66e-01 \\
		\midrule
		40 & Decorrelation & 0.7040 & 2.61e+01 \\
		& Max. Nongaussianity & 0.3777 & 2.66e-01 \\
		& Johansen & 0.1498 & 9.60e-01 \\
		\midrule
		50 & Decorrelation & 0.5428 & 3.05e+01 \\
		& Max. Nongaussianity & 0.3048 & 1.82e-01 \\
		& Johansen & 0.0906 & 4.32e-02 \\
		\midrule
		100 & Decorrelation & 0.1355 & 7.10e-01 \\
		& Max. Nongaussianity & 0.1809 & 8.47e-02 \\
		& Johansen & 0.0367 & 2.62e-03 \\
		\midrule
		3000 & Decorrelation & 0.0022 & 1.34e-05 \\
		& Max. Nongaussianity & 0.0089 & 3.85e-04 \\
		& Johansen & 0.0011 & 2.18e-06 \\
		\bottomrule
	\end{tabular}
\end{table}

\begin{figure}
	\centering
	\includegraphics[totalheight=8cm]{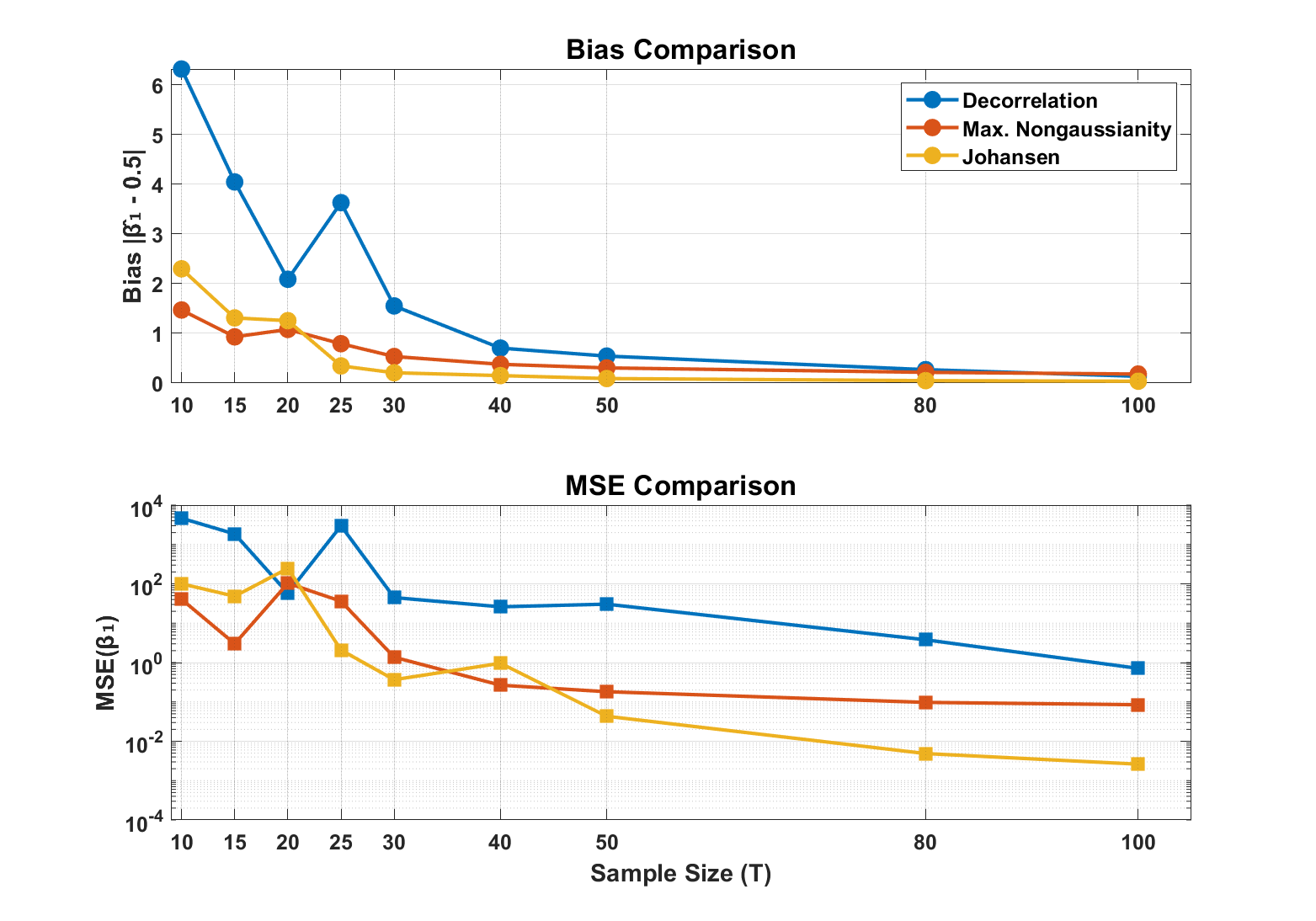}
	\caption{Monte Carlo Simulation Error Comparison}
	\label{fig:montecarloerror}
\end{figure}

In the comparative analysis conducted, it is evident that Johansen's method continues to exhibit superior precision and accuracy, particularly when applied to datasets characterized by large sample sizes. This observation is further supported by the comprehensive data presented in Table~3, which meticulously details the performance outcomes of the three aforementioned methods across varying sample sizes. However, a nuanced examination reveals that in instances where the sample size is constrained and resources are limited, the Decorrelation and Maximisation of Nongaussianity techniques demonstrate a marginally enhanced performance profile \cite{chan2013advantages}.

\begin{table}\label{testcomparisons}\caption{Test results by different sample sizes}
	\begin{center}
		\begin{tabular}{|c|c|c|c|}
			\hline
			$T$&{Johansen}&{Decorrelation}&{Max. nongauss} \\
			\hline   $50$&$(0.4247,1)^T$&$(0.4861,1)^T$&$(0.4678,1)^T$\\
			\hline
			$3000$&$(0.4991,1)^T$&$(0.4954,1)^T$&$(0.5008,1)^T$\\
			\hline
		\end{tabular}
	\end{center}
\end{table}

The significance and motive for introducing maximising nongaussianity is that it is possible for us to tackle higher dimensional cases. The following example shall illustrate the performance of our algorithm \ref{mainalgorithm} in a higher dimension.

\subsection{Test for higher dimensional finite samples}

\hfill

The robustness of maximising nongaussianity can be seen from its preponderance in dealing with time series or signals with high dimensions. Consider the following time series $\mathbf{s}_t$ defined by
\begin{equation}\label{ex2}
	\begin{pmatrix} s_{1t} \\ s_{2t} \\ s_{3t} \end{pmatrix} - \begin{pmatrix} -0.3 & -0.03 & -0.375 \\ -1.88 & -1.472 & -2.08 \\ 1.07 & 0.428 & 1.27 \end{pmatrix}\begin{pmatrix} s_{1,t-1} \\ s_{2,t-1} \\ s_{3,t-1} \end{pmatrix}=\begin{pmatrix} \varepsilon_{1t} \\ \varepsilon_{2t} \\ \varepsilon_{3t} \end{pmatrix} - \begin{pmatrix} 0.75 & 0.075 & 0.9375 \\ 4.7 & 3.68 & 5.2 \\ -2.675 & -1.07 & -3.175 \end{pmatrix}\begin{pmatrix} \varepsilon_{1,t-1} \\ \varepsilon_{2,t-1} \\ \varepsilon_{3,t-1} \end{pmatrix}
\end{equation}
By applying the transformation $L=\begin{pmatrix} 1 & -2 & -0.5 \\ 0.5 & 1 & 4 \\ 2 & 0.25 & 1\end{pmatrix}$ to the left-hand-side of the VARMA model, we have
\begin{align}
	\begin{pmatrix} z_{1t} \\ z_{2t} \\ z_{3t} \end{pmatrix} - \begin{pmatrix} 13 & 12 & 14 \\ 10 & 1 & 12.5 \\ 0 & 0 & 0 \end{pmatrix}\begin{pmatrix} z_{1,t-1} \\ z_{2,t-1} \\ z_{3,t-1} \end{pmatrix}=\begin{pmatrix} \delta_{1t} \\ \delta_{2t} \\ \delta_{3t} \end{pmatrix} - \begin{pmatrix} -7.3125 & -6.75 & -7.875 \\ -5.625 & -0.5625 & -7.03125 \\ 0 & 0 & 0 \end{pmatrix}\begin{pmatrix} \delta_{1,t-1} \\ \delta_{2,t-1} \\ \delta_{3,t-1} \end{pmatrix}
\end{align}
where $\mathbf{z}_t=L\mathbf{s}_t$ and $\boldsymbol{\delta}_t=L\boldsymbol{\varepsilon}_t$ for any $t \in \R$. The third component, $z_{3t}$, of the combined series $\mathbf{z}_t$ is a stationary series as $\boldsymbol{\delta}_t=L\boldsymbol{\varepsilon}_t$ is a stationary series. Hence $\boldsymbol{\beta} = (2, 0.25, 1)^T$ is the desired analytical cointegration vector. The following analysis shows the implementation and accuracy of our method.

Applying the optimisation (maximising nongaussianity) method to the VARMA model \eqref{ex2} using the objective function $F(x)=\ln \cosh(x)$ and after diagonalising the coefficient matrix, we obtain the matrix solution to the optimisation problem \eqref{nongaussianopt}:
$$\hat{W}=\begin{pmatrix} -0.0402 & 0.9856 & -0.1640 \\ 0.4652 & -0.1268 & -0.8761 \\ 0.8843 & 0.1115 & 0.4534 \end{pmatrix}$$
One may also check that the rows of the coefficient matrix $\hat{W}$ are orthogonal to each other. Normalising the last row of $\hat{W}$ which corresponds to the last component of the combined series $\boldsymbol{\varepsilon}_t$ that produces a stationary process, we have the estimated cointegration vector $\boldsymbol{\hat{\beta}} = (1.9503, 0.2459, 1)^T$ compared to the analytical solution $\boldsymbol{\beta} = (2, 0.25, 1)^T$, the relative error is around $2.06\%$.

\subsection{High-Dimensional Example with Multiple Cointegration Vectors} \label{sec:high_dim}

\hfill

To demonstrate the capability of our method in higher-dimensional settings with multiple cointegration relationships, we consider a 4-dimensional system with two cointegration vectors. The data generating process is defined as follows:

Let \(\mathbf{c}_t = (c_{1t}, c_{2t}, c_{3t}, c_{4t})^\top\) where:
\begin{align*}
	c_{1t} &= c_{1,t-1} + \varepsilon_{1t}, \quad \varepsilon_{1t} \sim \mathcal{N}(0, 0.01) \\
	c_{2t} &= c_{2,t-1} + \varepsilon_{2t}, \quad \varepsilon_{2t} \sim \mathcal{N}(0, 0.01) \\
	c_{3t} &\sim  t(5) \\
	c_{4t} &\sim  t(5)
\end{align*}
The components \(\mathbf{c}_t\) are linearly mixed through a random invertible matrix \(A \in \mathbb{R}^{4\times4}\) to produce the observed series:
\[
\mathbf{s}_t = A \mathbf{c}_t
\]
We generate \(T=3000\) observations after a burn-in period of 500 points. The true cointegration vectors correspond to the last two rows of \(A^{-1}\), which represent the stationary non-Gaussian components (\(c_{3t}, c_{4t}\)).

Applying the maximization of nongaussianity method (Algorithm 2, Section 3.3) with \(F(x) = \ln\cosh(x)\), we obtain the demixing matrix \(\hat{W}\). The recovered components are tested for stationarity using the Augmented Dickey-Fuller test at 1\% significance level. The two components with the most negative test statistics (strongest evidence of stationarity) are identified as cointegration relationships. 

Again, standard critical values may lead to size distortion due to applying the ADF on time series resulting from the optimisation procedure. Hence, a more negative value should be used for residuals than usual. In our particular case, the critical value for both cointegration vectors are -11.3892 and -10.3729 respectively, which are widely more negative than usual critical values, while a rigorous analysis on critical values used for BSS procedures should be implemented via a wider range of simulation.

Results from a representative simulation show:
\begin{align*}
	&\text{\textbf{Estimated cointegration vectors:}} \\
	&\hat{\boldsymbol{\beta}}_1 = [1.0000, 0.6356, 0.4620, -0.2958]^\top \quad  \\
	&\hat{\boldsymbol{\beta}}_2 = [1.0000, 0.2799, 0.8265, 0.1067]^\top \quad  \\
	&\text{\textbf{True cointegration vectors:}} \\
	&\boldsymbol{\beta}_1 = [1.0000, 0.6339, 0.4728, -0.2852]^\top \\
	&\boldsymbol{\beta}_2 = [1.0000, 0.3021, 0.8325, 0.1062]^\top
\end{align*}
The relative errors in the coefficients are below \(0.05\%\), confirming the method accurately recovers multiple cointegration vectors in high dimensions. Figure 6 visualizes the components and their autocorrelation functions.
\begin{figure}
	\centering
	\includegraphics[width=\textwidth]{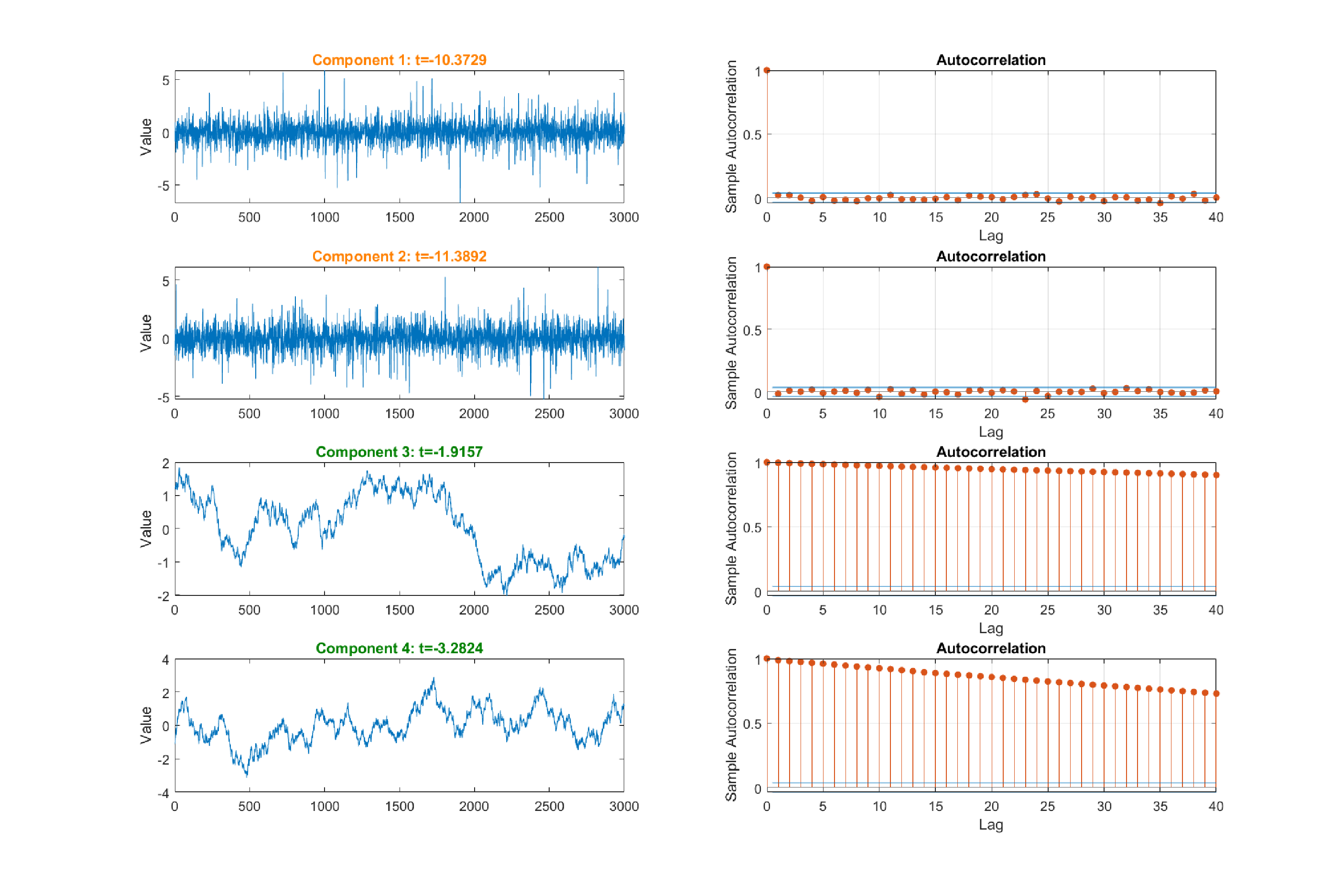}
	\caption{Recovered components via maximization of nongaussianity. Components 3 and 4 (bottom) show stationarity and match the true non-Gaussian sources.}
	\label{fig:high_dim}
\end{figure}

\subsection{Empirical data demonstration: oil prices}\label{sec:simulatedseries}

\hfill

In this section, our methodology is applied to authentic oil price data, enabling a comprehensive evaluation of its robustness and precision in analyzing long-term stochastic trends when compared to alternative methodologies. The dataset utilized encompasses the monthly oil prices of both Brent and Dubai spanning the years from 1960 to 2022. The intricacies of oil price dynamics have long been a focal point of research endeavors, primarily due to the multifaceted influences that shape its trajectory, including geopolitical events and the availability of alternative resources. Noteworthy studies by Pesaran and Timmermann (2005) \cite{pesaran2005small}, Narayan (2007) \cite{narayan2007modelling}, and Aloui and Mabroul (2010) \cite{aloui2010value} have delved into the potential for cointegration within oil price datasets, shedding light on the evolving nature of this critical commodity. Central to this investigation is the underlying premise that the progressive scarcity of a finite resource like oil inherently predisposes its price to exhibit a sustained upward trend, thereby creating fertile ground for the emergence of cointegration relationships over extended temporal horizons.

Fig.~\ref{fig:ex3_oil_prices} shows time plots of Brent and Dubai oil prices respectively, with 756 monthly data points. First, we shall see the application of decorrelation on cointegration of oil prices. The undiscovered components are shown in Fig.~\ref{fig:ex3_recovered_data} with the second series being a stationary process. After some algebraic manipulations we obtain the following mixing matrix $\begin{pmatrix} -1.9162 & 5.0396 \\ 3.1407 & -3.2599 \end{pmatrix}$ with the second row corresponding to the cointegration vector. After normalising, we have $\boldsymbol{\beta}=(1, -1.038)^T$.

Applying maximisation of nongaussianity following the algorithm \eqref{mainalgorithm}, we may also get combined signals which represent linear combinations of the original two series, as shown in Fig.~\ref{fig:ex4_combined_data}. From the corresponding row of the coefficient matrix, we obtain the cointegration vector $(-0.6905, 0.7234)^T$ and after normalising we have $\boldsymbol{\beta}=(1, -1.048)^T$.

Test for cointegration by Johansen's cointegration test is also implemented on the empirical data. The test suggests rank 1 cointegration in the series by trace test. From the cointegration equation result from Johansen's cointegration test, we have the cointegration vector $(0.7202, -0.7416)^T$ and after normalising we have $\boldsymbol{\beta}=(1, -1.030)^T$.

\begin{center}
	\begin{tabular}{|c|c|c|c|c|c|}
		\hline  $r$&$h$&stat&c-value&p-value&eigenvalue \\
		\hline  $0$&$1$&$83.2540$&$15.4948$&$0.0010$&$0.1018$\\
		\hline  $1$&$0$&$2.1571$&$3.8415$&$0.1425$&$0.0029$\\
		\hline
	\end{tabular}
\end{center}

\begin{figure*}[t!]
	\centering
	\begin{subfigure}[t]{0.5\textwidth}
		\centering
		\includegraphics[width=1\textwidth]{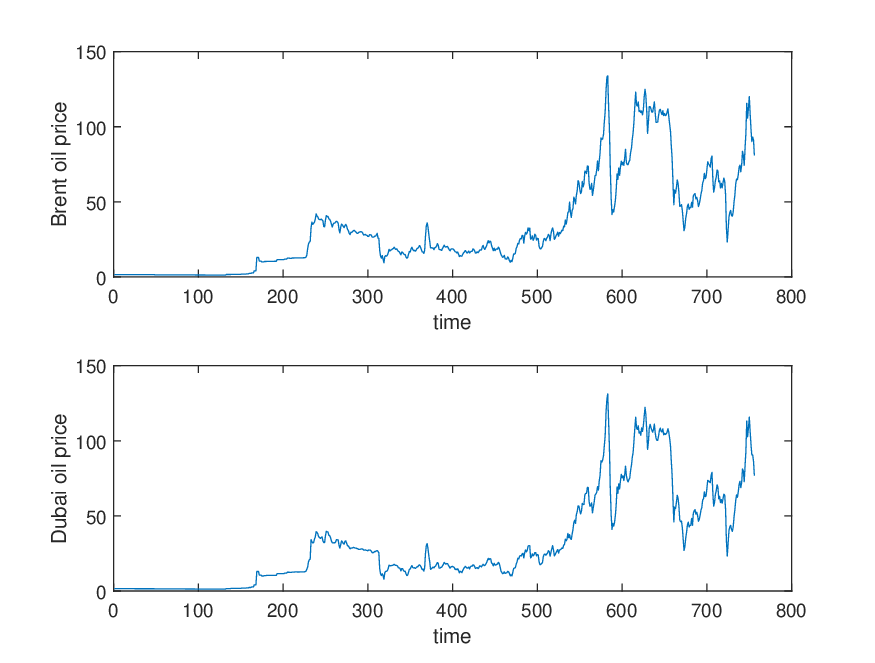}
		\caption{Time plots of oil prices}
		\label{fig:ex3_oil_prices}
	\end{subfigure}%
	~ 
	\begin{subfigure}[t]{0.5\textwidth}
		\centering
		\includegraphics[width=1\textwidth]{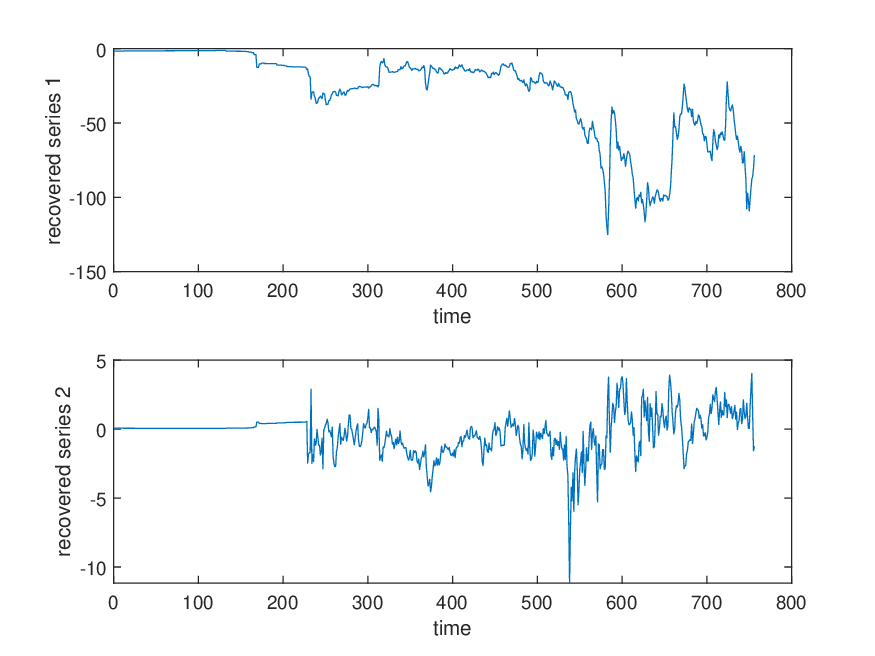}
		\caption{Recovered series after decorrelation}
		\label{fig:ex3_recovered_data}
	\end{subfigure}
	\caption{Cointegration test by decorrelation}
\end{figure*}	

\begin{figure*}[t!]
	\centering
	\begin{subfigure}[t]{0.5\textwidth}
		\centering
		\includegraphics[width=1\textwidth]{oil_prices}
		\caption{Time plots of oil prices}
		\label{fig:ex4_oil_prices}
	\end{subfigure}%
	~ 
	\begin{subfigure}[t]{0.5\textwidth}
		\centering
		\includegraphics[width=1\textwidth]{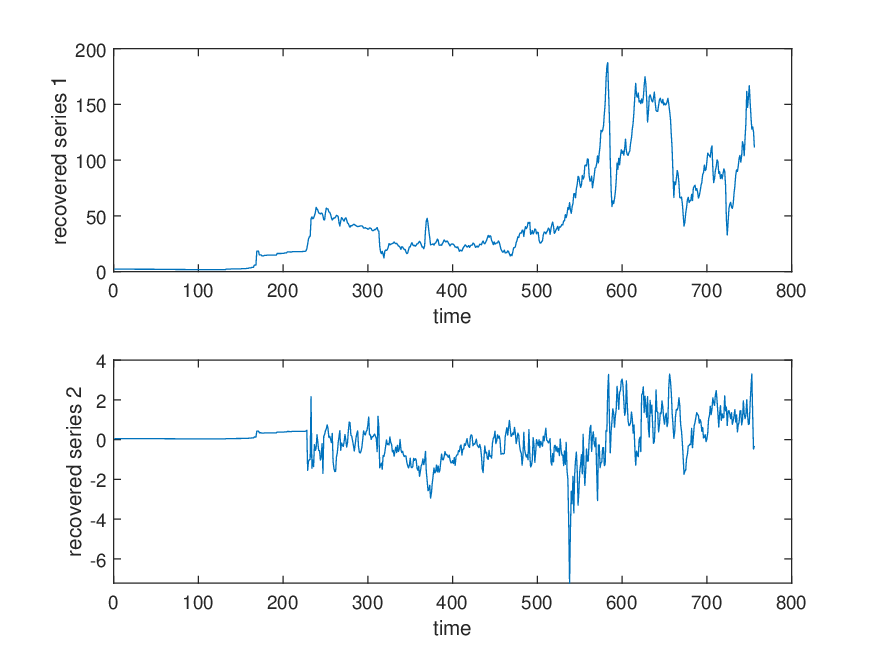}
		\caption{Recovered series}
		\label{fig:ex4_combined_data}
	\end{subfigure}
	\caption{Cointegration test by maximising nongaussianity}
\end{figure*}	

\section{Conclusion}
Two methods inspired by blind source separation are proposed in this research paper and are utilised in cointegration between time series in different dimensions. First and foremost, the decorrelation method is suitable for lower dimensional cases, typically involving 2 variables. The essence of the method is that it breaks down the problem into a polynomial form, and decorrelation simplifies the process and allows for easy solution. The optimisation program employed in this method demonstrates that once a stationary series is separated from a time series, tracing the cointegration vector becomes a straightforward task.

Furthermore, we have introduced how the maximisation of nongaussianity can be applied to test cointegration, which goes beyond bivariate cointegration. This method has shown its effectiveness in discovering the underlying stationary components of a mixed observed time series, thereby facilitating the subsequent process of finding cointegration. Generally, its preponderance clearly lies in its applicability to higher dimensional scenarios and overall satisfactory and promising accuracy.

Acknowledging the existence of well-developed cointegration tests, such as Johansen's cointegration test, note that the intention of this article is not to challenge the robustness of these tests, but rather to provide a new perspective on the role of independent components in time series analysis and the concept of cointegration. The advantages of utilizing independent components in various scenarios have been highlighted in terms of short-term statistics. We shall also emphasise the empirical advantage of these methods such that the order of the cointegration is not required as prior knowledge for cointegration and their convenience to be applied directly on the data sets provided for the time series compared to the Johansen's Cointegration Test. The research paper contributes to the field of time series analysis, numerical analysis and optimisation. By considering the advantages and limitations of these methods, valuable insights into the role of independent components in time series analysis and cointegration can be found.

\section{Acknowledgment}

The research of the author Zhiwen Zhang is partially supported by an R\&D Funding Scheme from the HKU-SCF FinTech Academy (HKU).

\section{Appendix}\label{sec:appendix}

\textbf{Johansen's Cointegration Test}

The most common practice of cointegration is using the Johansen's Cointegration Test \cite{johansen1991estimation}. Consider a VAR(k) model. Writing the model in Vector Error Correlation Model (VECM) form, we have 
\begin{equation}\label{VECM}
	\Delta\mathbf{s}_t=\boldsymbol{\mu}+\sum_{j=1}^{k-1}{\Gamma_j\Delta\mathbf{s}_{t-j}}+\Pi\mathbf{s}_{t-k}+\boldsymbol{\epsilon}_t
\end{equation}
Given that $\rank{\left(\Pi\right)}=r$, we can write $\Pi=\alpha\beta^T$ such that $\alpha$ and $\beta$ are matrices of dimension $r\times k$. Notice that the decomposition may not be unique. Denote $\mathbf{z}_{0t}=\Delta\mathbf{s}_t$, $\mathbf{z}_{1t}=\left(\Delta\mathbf{s}_{t-1}^T, \ldots, \Delta\mathbf{s}_{t-k+1}^T,\mathbf{1}^T\right)$ and $\mathbf{z}_{kt}=\Delta\mathbf{s}_{t-k}$. Then \eqref{VECM} can be transformed into
\begin{equation}\label{transformedVECM}
	\mathbf{z}_{0t}=\Gamma\mathbf{z}_{1t}+\alpha\beta^T\mathbf{z}_{kt}+\boldsymbol{\epsilon}_t
\end{equation}
where $\Gamma=\left(\Gamma_1, \ldots, \Gamma_{k-1},\boldsymbol{\mu}\right)$. Define the product moment matrix $M_{ij}=\frac{\sum_{t=1}^{T}{\mathbf{z}_{it}\mathbf{z}_{jt}^T}}{T}$ where $i, j = 0,1,k$. By regressing $\mathbf{z}_{it}$ on $\mathbf{z}_{1t}$, we have the residual $\mathbf{r}_{it}=\mathbf{z}_{it}-M_{i1}M_{11}^{-1}\mathbf{z}_{1t}$ and hence the residual sum of the squares from regressing $\mathbf{z}_{0t}$ and $\mathbf{z}_{kt}$ on $\mathbf{z}_{1t}$, $S_{ij}=\frac{\sum_{t=1}^{T}\mathbf{r}_{it}\mathbf{r}_{jt}^T}{T}$.

Johansen (1991) shows that based on the eigenvectors $\left(\mathbf{\hat v}_1, \ldots, \mathbf{\hat v}_r\right)$ derived from the equation
$$\det\left(\lambda S_{kk}-S_{k0}S_{00}^{-1}S_{0k}\right)=0$$
corresponding to the eigenvalues $\hat \lambda_1 > \ldots > \hat \lambda_n$.

We may conduct the Likelihood Ratio (LR) Test for the Number of Cointegration Vectors. For the trace test, impose the null hypothesis $$\text{H}_0: \rank\left(\Pi\right)=r_0 \text{ against } \text{H}_1: \rank\left(\Pi\right)>r_0$$ i.e. the number of linearly independent cointegration vectors, with the likelihood ratio statistics (trace) being $\text{LR}_{trace}(r_0)=-T\sum_{i=r_0+1}^n{\ln\left(1-\hat\lambda_i\right)}$. For the maximum eigenvalue test, conduct the following hypothesis testing:
$$\text{H}_0: \rank\left(\Pi\right)=r_0 \text{ against } \text{H}_1: \rank\left(\Pi\right)=r_0+1$$ with the likelihood ratio statistics (max) being $\text{LR}_{max}(r_0)=-T\ln\left(1-\hat\lambda_{r_0+1}\right)$.

Furthermore, with the Maximum Likelihood Estimation of the Cointegrated VECM based on the Granger's Representation Theorem, we may illustrate the procedures of the Johansen's Cointegration Test:
\begin{enumerate}
	\item Test for unit root of each time series component via Augmented Dickey-Fuller.
	\item Determine the optimal lag length for each series using an information criterion such as the Akaike Information Criterion (AIC) or the Schwarz Bayesian Criterion (SBC).
	\item Estimate a VAR model with the chosen lag length.
	\item Construct LR test for $\rank\left(\Pi\right)$.
	\begin{enumerate} 
		\item Assuming both have unit roots, then find linear approximation of relationship via OLS. Then create a series of the residuals.
		\item Test residuals for unit root via Augmented Dickey-Fuller.
	\end{enumerate}
	\item Normalise the cointegration vectors and estimate the resulting cointegrated VECM by maximum likelihood.
\end{enumerate}

\bibliographystyle{IEEEtran}
\bibliography{./reference}

\end{document}